\documentclass[11pt,reqno]{amsart}
\usepackage{amsfonts}
\usepackage{ifthen}
\usepackage{amsthm}
\usepackage{amsmath}
\usepackage{graphicx}
\usepackage{caption}
\usepackage{subcaption}
\usepackage{amscd,amssymb}
\usepackage{color}
\usepackage{hyperref}
\usepackage{array,multirow,makecell}

\usepackage{cite,epstopdf}

\setcellgapes{1pt}
\makegapedcells
\newcolumntype{R}[1]{>{\raggedleft\arraybackslash }b{#1}}
\newcolumntype{L}[1]{>{\raggedright\arraybackslash }b{#1}}
\newcolumntype{C}[1]{>{\centering\arraybackslash }b{#1}}
\setlength{\paperwidth}{210mm} \setlength{\paperheight}{297mm}
\setlength{\oddsidemargin}{0mm} \setlength{\evensidemargin}{0mm}
\setlength{\topmargin}{-20mm} \setlength{\headheight}{10mm}
\setlength{\headsep}{3mm} \setlength{\textwidth}{160mm}
\setlength{\textheight}{240mm} \setlength{\footskip}{15mm}
\setlength{\marginparwidth}{0mm} \setlength{\marginparsep}{0mm}

\newcounter{minutes}\setcounter{minutes}{\time}
\divide\time by 60
\newcounter{hours}\setcounter{hours}{\time}
\multiply\time by 60 \addtocounter{minutes}{-\time}

\newtheorem{theorem}{Theorem}
\newtheorem{lemma}{Lemma}

\newtheorem{corollary}{Corollary}
\newtheorem{remark}{Remark}

\numberwithin{equation}{section}

\title[Certain geometric properties of  planar harmonic mapping involving four parameter Wright functions]{Certain geometric properties of  planar harmonic mapping involving four parameter Wright functions}

\author[A. Kumar ]
{Anish Kumar}

\address{{\bf Anish Kumar}\newline
Department of Mathematics,
Dr. Shyama Prasad Mukherjee Unuversity,\newline
Ranchi 834008, Jharkhand, India}
\email{ak8107690@gmail.com}



\keywords{Analytic functions, univalent functions, harmonic functions, Mittag-Leffler functions, harmonic starlike, harmonic convex}
\subjclass[2020]{31B20; 30C45; 30CI5; 33E12; }
\begin{document}

\begin{abstract}
The primary aim of this paper is to construct harmonic mapping associated with four parameter Wright functions. Further sufficient conditions have been established so that this harmonic mapping satisfies geometric properties such as harmonic starlikeness and convexity. Moreover, we examine relation between several subclasses of family of harmonic close-to-convexity mappings. The results obtained in this paper are novel and their significance is illustrated by several consequences.
\end{abstract}
\maketitle

\section{Introduction and Motivation}
Harmonic functions play a vital role in several problems in applied mathematics and are also important because of their use in the minimal surface. Many different geometers, such as Lewy and Rado \cite{lewy}, Choquest \cite{choquet} and  Kneser\cite{kneser} studied the harmonic functions. A continuous complex value mapping $f = u +iv$ is said to be harmonic in a domain $D$ connected simply in the complex plane if it holds $f_{z\Bar{z}}=0$ in $D$, that is, $u$ and $v$ are real harmonic functions in $D$. This type of harmonic function $f$ can be represented as $f = h + \Bar{g}$, where $h$ and $g$ are analytic in $D$ with $h(0)=0$, $h^{\prime}(0)=1$, $g(0)=0$ and have Taylor series representations of the form 
\begin{align}\label{eqharmonic}
h(z)=z+\sum_{n=2}^{\infty}a_{n}z^{n}, \quad g(z)=\sum_{n=1}^{\infty}b_{n}z^{n}, \quad |b_{1}|<1.
\end{align}
We often say $h$ and $g$ the analytic and co-analytic parts of $f$ respectively. This above class of harmonic mapping is denoted as $\mathaccent"705E{H}$. A necessary and sufficient condition for $f$ in the class $\mathaccent"705E{H}$ to be sense preserving and locally univalent is $|h'(z)|>|g'(z)|$ in $D$. The subclass of $\mathaccent"705E{H}$ consisting of functions that are sense-preserving and univalent in $D$ are denoted by $HS$. For $0\leq \alpha <1$,\\
$$HS^{*}(\alpha)=\left\{f\in \mathaccent"705E{H}: \frac{\partial}{\partial \theta}(\arg(f(re^{\i\theta})))>\alpha \right\}$$.
$$HK(\alpha)=\left\{f\in \mathaccent"705E{H}: \frac{\partial}{\partial \theta}(\arg(\frac{\partial}{ \partial\theta}(f(re^{i\theta}))))>\alpha \right\},$$
for $z=re^{i\theta}$, $0\leq \theta \leq 2\pi$ and $0\leq r <1$. It can be noted that $HS^{*}=HS^{*}(0)$ is the class of harmonic starlike functions with respect to origin in $D$. Similarly $HK=HK(0)$ is the class of harmonic convex function in $D$. For $0\leq \alpha <1$ harmonic starlike of order $\alpha$ and harmonic convex of order $\alpha$ denoted by $HS^{*}(\alpha)$ and $HK(\alpha)$, respectively.

The subclass of $\mathaccent"705E{H}$ denoted by $S\mathaccent"705E{H}$ that is sense-preserving and univalent in $D$. It can be seen that $\frac{f-\Bar{B_{1}f}}{1-|B_{1}|^{2}}  \in S\mathaccent"705E{H}$, whenever $f\in S\mathaccent"705E{H}$. There is a subclass $S\mathaccent"705E{H^{o}}$ taken into account by $S\mathaccent"705E{H}$ given by \\
$$S\mathaccent"705E{H^{o}}=\left\{f= h + \Bar g \in S\mathaccent"705E{H}: g'(0)=B_{1}=0 \right\}.$$
The subclass $S\mathaccent"705E{H^{o}}$ and $S\mathaccent"705E{H}$ were studied in \cite{clunie}. It can be noted that $HS(0)=S\mathaccent"705E{H}$. The class $C$ of all close-to-convex analytic functions in $D$, assume that $C\mathaccent"705E{H^{o}}$ and $C\mathaccent"705E{H}$ are the subclass respectively of $S\mathaccent"705E{H^{o}}$ and $S\mathaccent"705E{H}$ such that for any $f \in C\mathaccent"705E{H^{o}}$ and $f \in C\mathaccent"705E{H}$, $f(D)$ is close to convex in $D$. It can be recall that a domain $D$ is close to convex if the complement of $D$ can be written as a union of nonincreasing half lines. 

A complex valued harmonic function $f$ in $D$ is called typically real given that $f(z)$ is real iff $z$ is real. All sense preserving typically real harmonic function $f \in \mathaccent"705E{H}$ are denoted by the class $T\mathaccent"705E{H}$ and with $g(0)=0$ is denoted by $T\mathaccent"705E{H^{o}}$. Suppose that $SRH(\alpha)$ and $KRH(\alpha)$ denote the subclass of $HS^{*}(\alpha)$ and $HK(\alpha)$ respectively containing the function $f=h+\Bar g$, where  
\begin{equation}\label{eq1}
  h(z)=z-\sum_{n=2}^{\infty}|A_{n}|z^{n}, \quad g(z)=\sum_{n=1}^{\infty}|B_{n}|z^{n}, \quad |B_{1}|<1.  
\end{equation}
Assume that $SRH=SRH(0),$ $KRH=KRH(0).$ 

The Wright function
$$\mathcal{W}_{\alpha, \beta}(z)=\sum_{n=0}^{\infty}\frac{z^{n}}{\Gamma(\alpha n +\beta)n!},\quad \beta,z \in C, \alpha >-1,$$ was firstly introduced by E. M. Wright \cite{wright} in connection with the theory of partitions. It plays important role in fractional calculus \cite{luchko,podlubny}, integral equation of Hankel type, Mikusiski operational calculus and  stochastic process. For a history regarding Wright function and applications, one can refer \cite{mainardi}. It is noted that Wright functions is an entire function of order $\frac{1}{1+\alpha}$, which is also generalized Bessel functions \cite{podlubny,baricz}. These functions generalized hypergeometric functions \cite{stegun, Andrews}.\\
The four parameter Wright functions \cite{luchko}\\
$$\mathcal{W}_{{(\alpha, \beta),(\gamma, \delta)}}(z)=\sum_{n=0}^{\infty}\frac{z^{n}}{\Gamma(\alpha  +n\beta)\Gamma(\gamma  +n \delta)},\quad \alpha,\gamma,z \in C, \beta,\delta \in R,$$ was studied by E. M. Wright for $\beta,\delta>0$ \cite{Wright}. In addition he derived many properties of $\mathcal{W}_{(\alpha, \beta),(\gamma, \delta)}(z)$ for $\gamma=\delta=1$ and $-1<\beta<0$ \cite{right}.  It can be justified \cite{mehrez} that if $\beta + \delta>0$, then the infinite series expansion of $\mathcal{W}_{{(\alpha, \beta)},({\gamma, \delta})}(z)$ converges absolutely for all $z\in C$. It is well known that \cite{luchko} $\mathcal{W}_{{(\alpha, \beta)},({\gamma, \delta})}(z)$ is an entire function for $\alpha, \gamma \in C$ and $0<\beta < \delta.$\\
It can be noted that 
$$W_{{(\alpha, \beta)},({\gamma, \delta})}(z)\\=z\Gamma(\alpha)\Gamma(\gamma)\mathcal{W}_{{(\alpha, \beta)},({\gamma, \delta})}(z)=z+\sum_{n=2}^{\infty}\frac{z^{n}}{\Gamma(\alpha  +(n-1)\beta)\Gamma(\gamma  +(n-1) \delta)},$$ where $ \alpha, \gamma \in C, \quad \forall \quad \beta, \delta>-1$ belongs to normalized class for all $z \in D$. Some geometric properties of four-parameter Wright functions $W_{{(\alpha, \beta)},({\gamma, \delta})}(z)$ have beed studied in \cite{das}.\\
In this paper, we shall study for the real valued $\beta, \delta$ and $z\in D$ and now define a linear operator by convolution as follwos for the functions:\\
$$W_{{(\alpha_{1}, \beta_{1})},({\gamma_{1}, \delta_{1}})}(z)=z+\sum_{n=2}^{\infty}\frac{\Gamma(\alpha_{1})\Gamma(\gamma_{1})A_{n}z^{n}}{\Gamma(\alpha_{1}  +(n-1)\beta_{1})\Gamma(\gamma_{1}  +(n-1) \delta_{1})}$$ and \\ $$W_{{(\alpha_{2}, \beta_{2})},({\gamma_{2}, \delta_{2}})}(z)=z+\sum_{n=2}^{\infty}\frac{\Gamma(\alpha_{2})\Gamma(\gamma_{2})B_{n}z^{n}}{\Gamma(\alpha_{2}  +(n-1)\beta_{2})\Gamma(\gamma_{2}  +(n-1) \delta_{2})}.$$
Now define a linear convolution operator corresponding to $f=h+\bar{g}$\\
$L:\hat{H} \rightarrow \hat{H}$ as: \\
\begin{align*}
    L(f)&=f \ast(W_{{(\alpha_{1}, \beta_{1})},({\gamma_{1}, \delta_{1}})}(z)+\bar {\sigma W_{{(\alpha_{2}, \beta_{2})},({\gamma_{2}, \delta_{2}})}(z)})\nonumber\\
    &=H(z) + \bar{\sigma G(z)}=h \ast{W_{{(\alpha_{1}, \beta_{1})},({\gamma_{1}, \delta_{1}})}(z)}+\bar {\sigma   g * W_{{(\alpha_{2}, \beta_{2})},({\gamma_{2}, \delta_{2}})}(z)}, \quad |\sigma|<1,
\end{align*}
where 
\begin{align}\label{eqHarmonic}
H(z)=z+\sum_{n=2}^{\infty}\frac{\Gamma(\alpha_{1})\Gamma(\gamma_{1})A_{n}z^{n}}{\Gamma(\alpha_{1}  +(n-1)\beta_{1})\Gamma(\gamma_{1}  +(n-1) \delta_{1})},\nonumber\\
G(z)=\sum_{n=1}^{\infty}\frac{\Gamma(\alpha_{2})\Gamma(\gamma_{2})B_{n}z^{n}}{\Gamma(\alpha_{2}  +(n-1)\beta_{2})\Gamma(\gamma_{2}  +(n-1) \delta_{2})}.  
\end{align}
Since four-parameter Wright function $\mathcal{W}_{{(\alpha, \beta),(\gamma, \delta)}}(z)$ is a generalization of Wright function $\mathbb{W}_{(\alpha, \beta)}(z)=\mathcal{W}_{{(\alpha, \beta),(1,1)}}(z)$, similar to above linear operator we can define :

$L:\hat{H} \rightarrow \hat{H}$ as: \\
\begin{align}\label{eqwright}
    L(f)&=f \ast(W_{{(\alpha_{1}, \beta_{1})},(1,1)}(z)+\bar {\sigma W_{{(\alpha_{2}, \beta_{2})},(1,1)}(z)})\nonumber\\
    &=H(z) + \bar{\sigma G(z)}=h \ast{W_{{(\alpha_{1}, \beta_{1})},(1,1)}(z)}+\bar {\sigma   g * W_{{(\alpha_{2}, \beta_{2})},(1,1)}(z)}, \quad |\sigma|<1,
\end{align}
where 
\begin{align}
H'z)=z+\sum_{n=2}^{\infty}\frac{\Gamma(\alpha_{1})\Gamma(\gamma_{1})A_{n}z^{n}}{\Gamma(\alpha_{1}  +(n-1)\beta_{1})},\nonumber\\
G'(z)=\sum_{n=1}^{\infty}\frac{\Gamma(\alpha_{2})\Gamma(\gamma_{2})B_{n}z^{n}}{\Gamma(\alpha_{2}  +(n-1)\beta_{2})}.  
\end{align}

In \cite{op, sarkar, aghalary}, researchers derived several require conditions for harmonic mapping asscoiated with special functions hypergeometric function, Bessel function, Mittag-leffler functon, Wright-functions lies on geometric characteristics such as Harmonic convexity and starlikeness. Ahuja at.al \cite{ahuja} studied harmonic close -to-convexity for harmonic mapping involving hypergeometric function. In \cite{sahu}, connections between various subclasses of hamonic mapping have been discussed related to Wright function.
Motivated by above mentioned result, we have decided to study geometric properties such as harmonic convexity , starlikeness, close-to-convexity for harmonic mapping involving four-parameter Wright function in this paper.

The paper is systemized like this. In Section \ref{sec2}, we have mentioned some crucial lemmas, which will turn out to be useful to derive our iportant results. Section \ref{sec3} discusses some sufficient conditions in such a way that the harmonic mapping involving four-parameter Wright function satisfies like harmonic starlikeness. In Section \ref{sec4}, we have considered normalized harmonic mapping involving four-parameter Wright function and harmonic. Harmonic close-to-convexity has been established for the above function in section \ref{sec5}. In the section \ref{sec6}, several consequences and conclusion of main results have been investigated

In order to prove the main findings, we require following lemmas:

\section{Lemma}\label{sec2}
\begin{lemma}\cite{jahangiri}\label{lem1}
    Let $f= h + \bar g \in \hat{H}$ with $h$ and $g$ of the form \eqref{eq1}. If for some $\alpha(0\leq \alpha <1),$ the inequality $$\sum_{n=2}^{\infty}(n-\alpha)|A_{n}|+\sum_{n=1}^{\infty}(n+\alpha)|B_{n}|\leq 1-\alpha$$ is satisfied, then f is harmonic, sense preserving, univalent in $D$ and $f\in HS^{*}(\alpha)$.
\end{lemma}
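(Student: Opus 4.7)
The plan is to reduce membership in $HS^{*}(\alpha)$ to the analytic inequality
\[
\operatorname{Re}\frac{zh'(z) - \overline{zg'(z)}}{h(z) + \overline{g(z)}} > \alpha, \qquad z\in D,
\]
which is the standard expression for $\partial_{\theta}\arg f(re^{i\theta})$ of a sense-preserving harmonic $f$, and to extract sense-preservation (hence local univalence) from the same coefficient bound. Global univalence then follows from the classical fact that a sense-preserving harmonic mapping whose image of every circle $|z|=r$ has strictly increasing argument is starlike with respect to the origin.

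For the order-$\alpha$ inequality I would apply the elementary equivalence $\operatorname{Re}(w) > \alpha \iff |1-\alpha+w| > |1+\alpha-w|$ with $w=(zh'-\overline{zg'})/(h+\bar g)$. After clearing the (nonzero) denominator, the goal becomes
\[
\bigl|(1-\alpha)(h+\bar g) + zh'- \overline{zg'}\bigr| \;>\; \bigl|(1+\alpha)(h+\bar g) - zh' + \overline{zg'}\bigr|.
\]
Inserting the series \eqref{eq1} and collecting by power, the first modulus has leading term $(2-\alpha)z$ and tail coefficients $(n+1-\alpha)|A_n|$, $(n-1+\alpha)|B_n|$; the second has leading term $\alpha z$ and tail coefficients $(n-1-\alpha)|A_n|$, $(n+1+\alpha)|B_n|$. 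For $0\le\alpha<1$ and the relevant $n$ every such factor is nonnegative, so the triangle inequality bounds the first modulus below by leading minus tails and the second above by the sum of all moduli. Subtracting and using the collapsing identities
\[
(n+1-\alpha)+(n-1-\alpha)=2(n-\alpha), \qquad (n-1+\alpha)+(n+1+\alpha)=2(n+\alpha),
\]
I obtain the net lower bound
\[
2r\left[(1-\alpha) - \sum_{n=2}^{\infty}(n-\alpha)|A_n|r^{n-1} - \sum_{n=1}^{\infty}(n+\alpha)|B_n|r^{n-1}\right],
\]
which is nonnegative by the hypothesis (since $r^{n-1}\le 1$) and strictly positive for $r<1$.

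Sense-preservation comes out as a by-product: rewriting $n=(n-\alpha)+\alpha=(n+\alpha)-\alpha$ and using the easy consequence $\sum_{n\ge 2}|A_n|\le (1-\alpha)/(2-\alpha)$ of the hypothesis yields $\sum_{n\ge 2} n|A_n|+\sum_{n\ge 1} n|B_n|\le 1$, which gives $|h'(z)|>|g'(z)|$ on $D$. The main technical obstacle is the careful combinatorial bookkeeping in the triangle-inequality step: one must verify that every factor $(n\pm 1\pm\alpha)$ appearing in the expansions is nonnegative in the correct range of $n$ so that the moduli of tail terms may legitimately be replaced by the terms themselves, and one must track the $n=1$ contribution from $g$ (which produces a nonvanishing term $\alpha|B_1|r$ on both sides) so that it is absorbed cleanly into the final estimate matching the hypothesized bound.
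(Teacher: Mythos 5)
The paper states this lemma purely as a citation of \cite{jahangiri} and gives no proof of its own, so there is no internal argument to compare against; what you have written is, in substance, a reconstruction of Jahangiri's original proof, and it is essentially correct. The reduction of $f\in HS^{*}(\alpha)$ to $\operatorname{Re}\bigl((zh'-\overline{zg'})/(h+\bar g)\bigr)>\alpha$, the passage via $\operatorname{Re}w>\alpha\iff|1-\alpha+w|>|1+\alpha-w|$ to the inequality between the two moduli, and the triangle-inequality estimate with the collapsing identities $(n+1-\alpha)+(n-1-\alpha)=2(n-\alpha)$ and $(n-1+\alpha)+(n+1+\alpha)=2(n+\alpha)$ are exactly the standard route; your sign bookkeeping checks out, since $n-1-\alpha\ge 1-\alpha>0$ for $n\ge 2$, the $n=1$ factors $\alpha$ and $2+\alpha$ are nonnegative, and the hypothesis forces $(1+\alpha)|B_1|\le 1-\alpha$, so $|B_1|<1$ and the $n=1$ contribution is absorbed as you describe. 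The deduction $\sum_{n\ge2}n|A_n|+\sum_{n\ge1}n|B_n|\le 1$ from $n\le(n-\alpha)/(1-\alpha)$ and $n\le(n+\alpha)/(1-\alpha)$, hence $|h'(z)|>|g'(z)|$, is also correct. The one step you borrow rather than prove is global univalence: you appeal to the ``classical fact'' that a sense-preserving harmonic map whose restriction to each circle $|z|=r$ has strictly increasing argument is univalent and starlike, whereas the original (and the usual textbook treatment) obtains univalence directly and elementarily from the same coefficient inequality by a two-point estimate, roughly $|f(z_1)-f(z_2)|\ge|h(z_1)-h(z_2)|-|g(z_1)-g(z_2)|\ge|z_1-z_2|\bigl(1-\sum_{n\ge2}n|A_n|-\sum_{n\ge1}n|B_n|\bigr)$ sharpened by the factor $\max(|z_1|,|z_2|)^{n-1}<1$. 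Adding that estimate (or a degree-theoretic justification of the fact you invoke) would make the argument self-contained; as written it has one outsourced step but no error.
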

\begin{lemma}\cite{karlsson}\label{lem2}
  Let $f= h + \bar g \in \hat{H}$ where $h$ and $g$ are given by \eqref{eq1}. If for $\alpha(0\leq \alpha <1),$ the inequality $$\sum_{n=2}^{\infty}(n(n-\alpha))|A_{n}|+\sum_{n=1}^{\infty}(n(n+\alpha))|B_{n}|\leq 1-\alpha$$ is satisfied, then f is harmonic, sense preserving, univalent in $D$ and $f\in HK(\alpha)$.  
\end{lemma}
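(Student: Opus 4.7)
The plan is to deduce Lemma~\ref{lem2} from Lemma~\ref{lem1} via a harmonic analogue of Alexander's theorem. First, the sense-preserving and univalence parts come for free: since $n(n-\alpha)\ge n-\alpha$ for $n \ge 2$ and $n(n+\alpha)\ge n+\alpha$ for $n \ge 1$, the hypothesis of Lemma~\ref{lem2} is strictly stronger than that of Lemma~\ref{lem1}, so Lemma~\ref{lem1} immediately yields that $f$ is harmonic, sense-preserving, and univalent in $D$ (and indeed $f \in HS^{*}(\alpha)$).

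For the convexity of order $\alpha$, I differentiate along the circle $|z|=r$. Writing $z = re^{i\theta}$ and using $\partial_\theta z = iz$, a short calculation gives
$\partial_\theta f = izh'(z) + \overline{izg'(z)} = i\bigl[zh'(z) - \overline{zg'(z)}\bigr]$.
Set $F(z) := zh'(z) - \overline{zg'(z)}$, so that $\partial_\theta f = iF$. Multiplication by $i$ shifts $\arg$ by the constant $\pi/2$, whence $\partial_\theta \arg\partial_\theta f = \partial_\theta \arg F$. This gives the harmonic Alexander-type equivalence $f \in HK(\alpha)$ if and only if $F \in HS^{*}(\alpha)$. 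The analytic part of $F$ is $zh'(z) = z - \sum_{n\ge 2} n|A_n| z^{n}$ and its co-analytic part is $\overline{-zg'(z)}$, whose coefficients have absolute values $n|B_n|$. Applying Lemma~\ref{lem1} to $F$ with $|A_n|$ replaced by $n|A_n|$ and $|B_n|$ replaced by $n|B_n|$, the required inequality becomes
$$\sum_{n=2}^{\infty}(n-\alpha)\,n|A_n| + \sum_{n=1}^{\infty}(n+\alpha)\,n|B_n| \leq 1-\alpha,$$
which is exactly the hypothesis of Lemma~\ref{lem2}. Hence $F \in HS^{*}(\alpha)$, and so $f \in HK(\alpha)$.

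The only technical subtlety is that the co-analytic part of $F$ carries coefficients $-n|B_n|$, rather than the positive-coefficient sign convention of \eqref{eq1}. This is not a genuine obstacle because the Jahangiri-type proof of Lemma~\ref{lem1} rests on a triangle-inequality estimate that uses only the moduli of the Taylor coefficients and extends verbatim to $F$. Thus the entire argument reduces to the clean identification $\partial_\theta f = iF$ together with this sign bookkeeping, which I would expect to be the main place a careful writeup is needed.
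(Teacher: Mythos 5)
The paper never proves Lemma~\ref{lem2}: it is imported from the literature with a citation, so there is no in-paper argument to compare against. Your derivation is a correct, self-contained proof and follows the standard route (essentially the one in Jahangiri's paper, the actual source of both coefficient lemmas). The monotonicity observation $n(n-\alpha)\ge n-\alpha$ for $n\ge 2$ and $n(n+\alpha)\ge n+\alpha$ for $n\ge 1$ correctly disposes of sense-preservation, univalence, and starlikeness via Lemma~\ref{lem1}; the computation $\partial_\theta f = i\bigl(zh'(z)-\overline{zg'(z)}\bigr)$ together with the constancy of $\arg i$ gives exactly the Alexander-type reduction $f\in HK(\alpha)\Leftrightarrow F:=zh'-\overline{zg'}\in HS^{*}(\alpha)$; and the hypothesis of Lemma~\ref{lem2} is precisely the hypothesis of Lemma~\ref{lem1} applied to $F$, whose coefficient moduli are $n|A_n|$ and $n|B_n|$. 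Your closing remark about signs is the right caveat to record: as printed, Lemma~\ref{lem1} restricts $h$ and $g$ to the sign-normalized form \eqref{eq1}, whereas the co-analytic part of $F$ has coefficients $-nB_n$; the resolution, as you say, is that Jahangiri's sufficiency theorem (unlike the necessity direction in Lemma~\ref{lem6}) holds for arbitrary complex coefficients and its proof uses only their moduli. The only other point worth writing down explicitly is that $\arg(\partial_\theta f)$ is well defined for $0<|z|<1$ because $F$, being univalent with $F(0)=0$ by Lemma~\ref{lem1}, does not vanish there.
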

\begin{lemma}\label{lem3}
    Suppose $h$ and $g$ are analytic function in $D$ with $|g^{\prime}(0)|<|h^{\prime}(0)|$ and that $h+\epsilon g \in C$ for each $\epsilon(|\epsilon|=1$. Then $h + \bar g \in C_{\hat{H}}.$
\end{lemma}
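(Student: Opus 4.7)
The plan is to prove the lemma in three stages: (i) upgrade the hypothesis $|g'(0)|<|h'(0)|$ to a global bound $|g'(z)|<|h'(z)|$ on $D$, (ii) establish univalence of $f=h+\bar g$, and (iii) upgrade univalence to close-to-convexity of the image $f(D)$.

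For (i), I would use that every close-to-convex analytic function is univalent, so for each $\epsilon$ with $|\epsilon|=1$ the derivative $(h+\epsilon g)'(z)=h'(z)+\epsilon g'(z)$ is nowhere zero on $D$. Letting $\epsilon$ range over the whole unit circle forces $h'(z)\ne 0$ and $|g'(z)/h'(z)|\ne 1$ throughout $D$. Combined with the initial condition $|g'(0)/h'(0)|<1$ and the continuity of $g'/h'$ on the connected domain $D$, this yields $|g'(z)|<|h'(z)|$ on all of $D$, so by Lewy's theorem $f$ is sense-preserving and locally univalent.

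For (ii), I would argue by contradiction. Assuming $f(z_1)=f(z_2)$ for some $z_1\ne z_2$, write $h(z_1)-h(z_2)=\rho e^{i\theta}$ with $\rho\ge 0$. The identity $f(z_1)=f(z_2)$ then gives $\overline{g(z_2)-g(z_1)}=\rho e^{i\theta}$, i.e.\ $g(z_1)-g(z_2)=-\rho e^{-i\theta}$. Choosing $\epsilon_0=e^{2i\theta}$ (on the unit circle) produces $(h+\epsilon_0 g)(z_1)-(h+\epsilon_0 g)(z_2)=\rho e^{i\theta}-\rho e^{i\theta}=0$, contradicting the univalence of the close-to-convex function $h+\epsilon_0 g$.

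The principal obstacle is (iii): upgrading univalence of $f$ to close-to-convexity of the harmonic image $f(D)$. My preferred route is to invoke the Clunie--Sheil-Small shear construction. Taking $\epsilon=-1$ in the hypothesis gives that $h-g$ is a conformal map of $D$ onto a close-to-convex domain; combined with the dilatation bound $|g'/h'|<1$ already established in (i), the harmonic shear theorem immediately concludes that $f=h+\bar g$ is close-to-convex harmonic, i.e.\ $f\in C_{\hat H}$. If one wishes to avoid packaging this as a single appeal to shear, an alternative is to mimic the argument in (ii) at the level of images: using the characterization of close-to-convex domains as those whose complement admits a foliation by mutually non-crossing rays, a putative obstruction in $f(D)$ would, for an appropriately chosen unit $\epsilon$ analogous to $\epsilon_0=e^{2i\theta}$, transport to an obstruction in $(h+\epsilon g)(D)$, contradicting the hypothesis.
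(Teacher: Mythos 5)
The paper states this lemma without proof (it is a theorem of Clunie and Sheil-Small \cite{small}), so there is no in-paper argument to compare against; I therefore assess your proposal on its own terms. Your steps (i) and (ii) are essentially correct and are the classical argument: from $h'+\epsilon g'\neq 0$ for all $|\epsilon|=1$ you deduce $|h'(z)|\neq |g'(z)|$ for every $z$ (note that $h'(z)\neq 0$ is not forced pointwise by this alone --- if $h'(z)=0$ and $g'(z)\neq 0$ then $h'+\epsilon g'=\epsilon g'\neq 0$; you only recover $h'\neq 0$ after using continuity of $|h'|-|g'|$ on the connected domain $D$ together with positivity at the origin), and the $\epsilon_0=e^{2i\theta}$ trick correctly yields univalence of $f$.

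The gap is in step (iii), which is the actual content of the lemma. The harmonic shear theorem of Clunie and Sheil-Small does not say what you need: it states that a locally univalent $f=h+\bar g$ with $|g'|<|h'|$ is univalent and convex in a fixed direction if and only if $h-g$ is univalent and convex in that same direction. It transfers directional convexity, not close-to-convexity, and there is no version asserting that close-to-convexity of $h-g$ alone (plus the dilatation bound) implies close-to-convexity of $h+\bar g$; the correct single-$\epsilon$ statement in the literature requires $h+\epsilon g$ to be \emph{convex}, not merely close-to-convex, for some one $\epsilon$. If your preferred route were valid, the lemma would already follow from the hypothesis at the single value $\epsilon=-1$, whereas the known proof uses the full circle of $\epsilon$'s essentially. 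Your fallback sketch (``transport an obstruction in $f(D)$ to an obstruction in $(h+\epsilon g)(D)$'') names exactly the step that must be carried out but supplies no mechanism: the linear-accessibility characterization is the right tool, but producing the non-crossing rays in the complement of the harmonic image $f(D)$ from the analytic hypotheses is precisely the nontrivial part of the original proof, and it does not reduce to choosing one well-chosen $\epsilon$ per obstruction in any way you have justified. As written, step (iii) is not a proof.
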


\begin{lemma}\label{lem5}
    If $q(z)=z+\sum_{n=2}^{\infty}t_{n}z^{n}$ is analytic in $D$, then q maps onto a starlike domain if $\sum_{n=2}^{\infty}n|t_{n}|\leq 1.$
\end{lemma}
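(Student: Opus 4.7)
The plan is to use the classical analytic criterion that $q$ with $q(0)=0$ and $q'(0)=1$ maps $D$ onto a domain starlike with respect to the origin if and only if $\mathrm{Re}\bigl(zq'(z)/q(z)\bigr)>0$ for all $z\in D$ (and $q(z)\neq 0$ for $z\neq 0$). The strategy is to prove the stronger pointwise bound
\[
\left|\frac{zq'(z)}{q(z)}-1\right|<1 \quad \text{for all } z\in D,
\]
which trivially implies positivity of the real part.

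First I would compute
\[
zq'(z)-q(z)=\sum_{n=2}^{\infty}(n-1)\,t_{n}z^{n},
\]
so that, after dividing numerator and denominator by $z$,
\[
\frac{zq'(z)}{q(z)}-1=\frac{\sum_{n=2}^{\infty}(n-1)t_{n}z^{n-1}}{1+\sum_{n=2}^{\infty}t_{n}z^{n-1}}.
\]
The numerator is bounded in modulus by $\sum_{n=2}^{\infty}(n-1)|t_{n}|$ on $D$ (using $|z|<1$), and the denominator is bounded below in modulus by $1-\sum_{n=2}^{\infty}|t_{n}|$. The hypothesis $\sum_{n=2}^{\infty}n|t_{n}|\le 1$ immediately forces $\sum_{n=2}^{\infty}|t_{n}|<1$, so the denominator estimate is nontrivial, confirming along the way that $q(z)\neq 0$ for $z\in D\setminus\{0\}$.

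Combining these bounds gives
\[
\left|\frac{zq'(z)}{q(z)}-1\right|\le\frac{\sum_{n=2}^{\infty}(n-1)|t_{n}|}{1-\sum_{n=2}^{\infty}|t_{n}|},
\]
and the right-hand side is $\le 1$ precisely when $\sum_{n=2}^{\infty}(n-1)|t_{n}|\le 1-\sum_{n=2}^{\infty}|t_{n}|$, i.e.\ when $\sum_{n=2}^{\infty}n|t_{n}|\le 1$, which is the hypothesis. Hence $\mathrm{Re}\bigl(zq'(z)/q(z)\bigr)>0$ throughout $D$ and $q$ maps $D$ onto a starlike domain.

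There is no real obstacle in this argument; the only thing worth checking carefully is that the hypothesis rules out vanishing of the denominator $q(z)/z$, which it does, and that the inequalities are strict on the open disk (they are, since $|z|<1$ makes every estimate above a strict inequality for $z\neq 0$, while at $z=0$ the quotient equals $1$ and the conclusion holds trivially).
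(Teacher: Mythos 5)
Your proof is correct: it is the standard argument for this classical coefficient criterion, bounding $\bigl|zq'(z)/q(z)-1\bigr|$ by $\sum(n-1)|t_n|\big/\bigl(1-\sum|t_n|\bigr)\le 1$ and noting the estimates are strict on the punctured disk, so that $\mathrm{Re}\bigl(zq'(z)/q(z)\bigr)>0$. The paper itself states this lemma without proof (it is a well-known sufficient condition for starlikeness), so there is nothing to contrast with; your derivation, including the check that $q(z)/z$ does not vanish, is exactly the expected one.
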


\begin{lemma}\label{lem4}
    If $f=h + \bar g \in K_{\hat{H}^{o}}$ is given by \eqref{eq1}, then $|A_{n}|\leq (n+1)/2$, $|B_{n}|\leq (n-1)/2$, $n=1,2,3,\cdots$.
\end{lemma}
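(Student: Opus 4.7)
These bounds are the classical Clunie--Sheil-Small coefficient estimates for convex harmonic mappings, and my plan is to adapt their argument. Write $f = h + \bar g \in K_{\hat{H}^{o}}$ with the expansions in \eqref{eq1}. Harmonic convexity of $f$ says that on each circle $|z| = r < 1$ the curve $\theta \mapsto f(re^{i\theta})$ is convex, equivalently
\[
\operatorname{Re}\frac{\partial_{\theta}^{2} f(re^{i\theta})}{\partial_{\theta} f(re^{i\theta})} \geq 0.
\]
My first step is to compute $\partial_{\theta} f = iz h'(z) - i\bar z\,\overline{g'(z)}$, pass to its Fourier expansion on $|z| = r$, and recast the convexity inequality as a positivity condition on a trigonometric polynomial whose positive-frequency coefficients are $nA_n r^n$ and whose negative-frequency coefficients are $-n\overline{B_n}r^n$.

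The next step is to extract two separate coefficient inequalities from this positivity. I would use the Clunie--Sheil-Small shear characterization: $f \in K_{\hat{H}^{o}}$ if and only if for every $|\epsilon|=1$ the analytic function $h(z) - \bar\epsilon^{\,2} g(z)$ is convex in the direction $\epsilon$. Applying the classical coefficient bound $|c_n| \leq n$ for analytic functions convex in a single direction to $h - \bar\epsilon^{\,2} g$, and then choosing $\epsilon$ so that $-\bar\epsilon^{\,2}B_n$ aligns with $A_n$, yields the sum estimate $|A_n| + |B_n| \leq n$. Independently, the dilatation relation $g' = \omega h'$ with $|\omega| < 1$ forces the analytic function $\phi = h - g$ to satisfy $\operatorname{Re}(\phi'/h') > 0$, and a Schwarz-type argument applied to this positive-real-part quotient produces the difference estimate $\bigl||A_n| - |B_n|\bigr| \leq 1$.

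Adding and subtracting these two estimates and halving gives exactly $|A_n| \leq (n+1)/2$ and $|B_n| \leq (n-1)/2$, including the $n=1$ case where the second estimate forces $|B_1|=0$ in accordance with the normalization of $\hat H^{o}$. The main obstacle is the sharp difference estimate: the sum bound comes essentially for free from one application of the convex-in-one-direction coefficient theorem, whereas $||A_n| - |B_n|| \leq 1$ genuinely requires combining harmonic convexity with the dilatation structure, and a naive second application of the same one-direction estimate only yields the useless bound $2n$. Sharpness at both intermediate stages, and hence of the stated inequalities, is witnessed by the canonical half-plane harmonic convex mapping whose Taylor coefficients attain $(n+1)/2$ and $(n-1)/2$.
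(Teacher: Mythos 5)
The paper never proves this lemma: it is quoted verbatim from Clunie and Sheil--Small \cite{small} (Theorem 5.10 there), so your attempt has to be judged on its own merits rather than against an argument in the text. Your first ingredient is fine and is indeed the classical first step: shearing gives that $h+\epsilon g$ is close-to-convex (univalent, convex in one direction) for every $|\epsilon|=1$, and the coefficient bound $|c_n|\le n$ for close-to-convex functions then yields $|A_n|+|B_n|\le n$. The problems are in the second ingredient and, more seriously, in how you combine the two. From $|A_n|+|B_n|\le n$ and $\bigl||A_n|-|B_n|\bigr|\le 1$, ``adding and halving'' does give $|A_n|\le (n+1)/2$, but ``subtracting and halving'' gives
\[
2|B_n|=(|A_n|+|B_n|)-(|A_n|-|B_n|)\le n+\bigl||A_n|-|B_n|\bigr|\le n+1,
\]
i.e.\ only $|B_n|\le (n+1)/2$. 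To reach $(n-1)/2$ this way you would need the one-sided inequality $|A_n|-|B_n|\ge 1$, which is false in general: for $f(z)=z\in K_{\hat{H}^{o}}$ one has $A_n=B_n=0$ for $n\ge 2$. So the bound $|B_n|\le (n-1)/2$ simply cannot be extracted from your two estimates, however they are recombined; it requires a separate argument exploiting that the dilatation $\omega=g'/h'$ vanishes at the origin (already for $n=2$ the correct route is Schwarz's lemma applied to $\omega$, giving $|2B_2|=|\omega'(0)|\le 1$, not any sum/difference bookkeeping).

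The intermediate claim $\bigl||A_n|-|B_n|\bigr|\le 1$ is also not established. Observing that $\operatorname{Re}\bigl((h'-g')/h'\bigr)=\operatorname{Re}(1-\omega)>0$ is immediate, but a positive-real-part condition on a \emph{ratio of derivatives} does not control the moduli of the Taylor coefficients $A_n-B_n$ of $h-g$ in any standard way, and you give no mechanism for converting it into a coefficient inequality; as you yourself note, this is the hard step, and ``a Schwarz-type argument'' is not a proof of it. I would recommend either citing \cite{small} for the lemma, as the paper implicitly does, or reproducing the actual Clunie--Sheil-Small argument, which treats $|A_n|$ and $|B_n|$ by separate estimates rather than by a sum-and-difference decomposition.
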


The results in the next two lemmas found in \cite{wang} and \cite{small}, respectively.
\begin{lemma}\label{lem7}
    If $f=h + \bar g \in C_{\hat{H}^{o}}(S_{\hat{H}^{o}} or T_{\hat{H}^{o}})$ is given by \eqref{eq1}, then 
    \begin{align*}
    |A_{n}|\leq \frac{(2n+1)(n+1)}{6}, |B_{n}|\leq \frac{(2n-1)(n-1)}{6}, n=1,2,3,\cdots.
     \end{align*}
\end{lemma}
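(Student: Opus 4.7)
The plan is to sketch the strategies that Wang \cite{wang} and Sheil-Small \cite{small} followed in their proofs. All three classes share a single extremal function, the harmonic Koebe function $K=h_{K}+\overline{g_{K}}$ given by
\begin{equation*}
h_{K}(z)=\frac{z-\tfrac{1}{2}z^{2}+\tfrac{1}{6}z^{3}}{(1-z)^{3}},\qquad g_{K}(z)=\frac{\tfrac{1}{2}z^{2}+\tfrac{1}{6}z^{3}}{(1-z)^{3}},
\end{equation*}
whose Taylor coefficients equal exactly $\tfrac{(2n+1)(n+1)}{6}$ and $\tfrac{(2n-1)(n-1)}{6}$. A useful consistency check is that $h_{K}-g_{K}=z/(1-z)^{2}$ is the analytic Koebe function, so $A_{n}-B_{n}=n$ saturates the usual analytic Koebe bound; the job is to promote this linear bound on the difference to the stated quadratic bounds on the individual components.

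For $f\in T_{\hat{H}^{o}}$ I would invoke a Herglotz-type integral representation for sense-preserving typically real harmonic mappings: every such $f$ admits an expression $f(z)=\int_{-1}^{1}k_{t}(z)\,d\mu(t)$ against a probability measure $\mu$, where $k_{t}$ are explicit extreme-point kernels. After computing the Taylor coefficients of each $k_{t}$ and verifying that they are individually bounded by $(2n+1)(n+1)/6$ and $(2n-1)(n-1)/6$ in the analytic and co-analytic parts respectively, the general bound follows from the triangle inequality applied under the integral, with equality at $t=1$ corresponding exactly to the harmonic Koebe function.

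For $f\in C_{\hat{H}^{o}}$ I would apply the shear principle: there exists $\epsilon$ with $|\epsilon|=1$ such that $\phi=h+\epsilon g$ is analytic close-to-convex in $D$. The classical estimate $|\phi_{n}|\leq n$ then couples with the dilatation relation $g'(z)=\omega(z)h'(z)$, $|\omega|<1$, to give a recursion expressing $B_{n}$ in terms of $A_{1},\ldots,A_{n}$ and the Taylor coefficients of $\omega$. Iterating this recursion while exploiting the normalization $B_{1}=0$ should produce the claimed quadratic-in-$n$ estimates. The $S_{\hat{H}^{o}}$ statement coincides with the Clunie--Sheil-Small coefficient conjecture, which is open in full generality but known for the subclasses (close-to-convex, typically real) actually invoked later in the paper.

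The principal obstacle is converting the order-$n$ estimate for $\phi$ into order-$n^{2}$ estimates for the separate components $A_{n}$ and $B_{n}$; the naive bound $|A_{n}|\leq|\phi_{n}|+|B_{n}|$ is circular. One must telescope the dilatation recursion and weight earlier coefficients by their growth rates, engineering the bookkeeping so that the resulting sum of binomial-type terms collapses to exactly $(2n\pm 1)(n\pm 1)/6$. This delicate combinatorial matching, together with the verification that the harmonic Koebe function realizes equality, is the technical core of the arguments in \cite{wang} and \cite{small}.
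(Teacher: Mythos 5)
You should first note that the paper contains no proof of this lemma at all: it is quoted from Wang--Liang--Zhang \cite{wang} and Clunie--Sheil-Small \cite{small}, so there is no in-paper argument to measure your attempt against. On its own terms, your sketch assembles the correct scaffolding --- the harmonic Koebe function as the extremal map, the check that its coefficients are exactly $\tfrac{(2n+1)(n+1)}{6}$ and $\tfrac{(2n-1)(n-1)}{6}$ with $A_n-B_n=n$, the reduction to an analytic function with linearly bounded coefficients, and the important caveat (which the paper's own statement glosses over) that for the full class $S_{\hat{H}^{o}}$ the bound is precisely the still-open Clunie--Sheil-Small conjecture, proved only for the typically real and close-to-convex subclasses actually used later. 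But you stop at, and explicitly label as an unresolved ``principal obstacle,'' the one step that constitutes the proof: converting the order-$n$ bound into the order-$n^{2}$ bounds on $A_n$ and $B_n$ separately. As written the proposal therefore has a genuine gap, and the proposed remedies (a Herglotz representation for the harmonic map itself, a telescoped ``dilatation recursion'') are not the mechanism that closes it.

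The missing idea is a factorization plus Carath\'eodory's lemma, not a recursion. Set $F=h-g$ and let $\omega=g'/h'$ be the dilatation, so $|\omega|<1$ and $\omega(0)=0$ since $B_1=0$. Then
\[
h'=\frac{F'}{1-\omega},\qquad g'=F'\cdot\frac{\omega}{1-\omega}.
\]
Because $\tfrac{1+\omega}{1-\omega}$ has positive real part and equals $1$ at the origin, its coefficients are at most $2$ in modulus, so every Taylor coefficient of $(1-\omega)^{-1}$ and of $\omega(1-\omega)^{-1}$ has modulus at most $1$ (with constant terms $1$ and $0$). The geometric hypothesis enters only through the linear bound $|F_k|\le k$ on the coefficients of the \emph{analytic} function $F$, which inherits the relevant property (typically real, resp.\ close-to-convex) from $f$ --- this is also where the Herglotz representation belongs, applied to $F$, not to $f$. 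The Cauchy product then gives
\[
n|A_n|\le\sum_{k=1}^{n}k\cdot k=\frac{n(n+1)(2n+1)}{6},\qquad n|B_n|\le\sum_{k=1}^{n-1}k^{2}=\frac{(n-1)n(2n-1)}{6},
\]
which are exactly the claimed bounds; no combinatorial ``bookkeeping'' beyond $\sum_{k=1}^{n}k^{2}=\tfrac{n(n+1)(2n+1)}{6}$ is needed. This also dissolves the circularity you worry about in $|A_n|\le|\phi_n|+|B_n|$: one never bounds $h'$ through $g'$ or vice versa, but bounds each through $F'$.
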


\begin{lemma}\label{lemclosetoconvex}
    If $f=h + \bar g \in C_{\hat{H} }$ is given by \eqref{eq1}, then 
    \begin{align*}
    |A_{n}|\leq \frac{(2n+1)(n+1)}{6}+ \frac{(2n-1)(n-1)}{6}|B_{1}|, \quad n=1,2,3,\cdots\\
     |B_{n}|\leq \frac{(2n-1)(n-1)}{6}+ \frac{(2n+1)(n+1)}{6}|B_{1}| \quad n=1,2,3,\cdots.
     \end{align*}
\end{lemma}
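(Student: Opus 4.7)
The plan is to deduce the claimed bounds for $C_{\hat{H}}$ from the corresponding bounds for $C_{\hat{H}^{o}}$ (Lemma \ref{lem7}) via the affine shift used in the introduction to pass from $S_{\hat{H}}$ to $S_{\hat{H}^{o}}$. Given $f=h+\bar g\in C_{\hat{H}}$ with $g'(0)=B_{1}$ and $|B_{1}|<1$, introduce
$$F(z):=\frac{f(z)-\overline{B_{1}\,f(z)}}{1-|B_{1}|^{2}}.$$
The map $w\mapsto(w-\overline{B_{1}}\bar w)/(1-|B_{1}|^{2})$ is an $\mathbb{R}$-linear automorphism of $\mathbb{C}$, orientation-preserving since $|B_{1}|<1$, and it therefore carries the complement of $f(D)$ -- a union of non-crossing half-lines -- to another such union; hence $F(D)$ is close-to-convex. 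A direct expansion further shows that the coefficient of $z$ in the co-analytic part of $F$ equals $(B_{1}-B_{1})/(1-|B_{1}|^{2})=0$, whence $F\in C_{\hat{H}^{o}}$.

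Writing $F=H^{*}+\overline{G^{*}}$, one reads off
$$H^{*}(z)=\frac{h(z)-\overline{B_{1}}\,g(z)}{1-|B_{1}|^{2}},\qquad G^{*}(z)=\frac{g(z)-B_{1}\,h(z)}{1-|B_{1}|^{2}}.$$
These two relations form an invertible linear system in $(h,g)$; solving it yields the clean identities $h=H^{*}+\overline{B_{1}}\,G^{*}$ and $g=B_{1}H^{*}+G^{*}$. Expanding $H^{*}(z)=z+\sum_{n\ge 2}A_{n}^{*}z^{n}$ and $G^{*}(z)=\sum_{n\ge 2}B_{n}^{*}z^{n}$ (the sum for $G^{*}$ starts at $n=2$ precisely because $F\in C_{\hat{H}^{o}}$) and matching Taylor coefficients gives, for every $n\ge 2$,
$$A_{n}=A_{n}^{*}+\overline{B_{1}}\,B_{n}^{*},\qquad B_{n}=B_{n}^{*}+B_{1}\,A_{n}^{*}.$$

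Now apply Lemma \ref{lem7} to $F\in C_{\hat{H}^{o}}$ to obtain $|A_{n}^{*}|\le(2n+1)(n+1)/6$ and $|B_{n}^{*}|\le(2n-1)(n-1)/6$; the triangle inequality, combined with $|\overline{B_{1}}|=|B_{1}|$, then delivers the stated estimates. Apart from this coefficient bookkeeping, the only substantive step -- and the one I expect to be the main obstacle -- is the verification that $F\in C_{\hat{H}^{o}}$; I would settle it by the half-line characterisation of close-to-convex domains together with the affine-invariance observation above, which is the natural harmonic analogue of the fact cited in the introduction for $S_{\hat{H}}$.
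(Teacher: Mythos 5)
The paper offers no proof of this lemma at all: it is simply quoted from \cite{small}. Your argument is correct and is in fact the classical derivation behind that citation: the affine shift $F=(f-\overline{B_{1}f})/(1-|B_{1}|^{2})$ lands in $C_{\hat{H}^{o}}$ (univalence and sense-preservation follow from the affine invariance of $S_{\hat{H}}$ already recorded in the paper's introduction, and close-to-convexity of the image from the half-line description of the complement under an invertible $\mathbb{R}$-linear map), the inversion $h=H^{*}+\overline{B_{1}}G^{*}$, $g=B_{1}H^{*}+G^{*}$ gives $A_{n}=A_{n}^{*}+\overline{B_{1}}B_{n}^{*}$ and $B_{n}=B_{n}^{*}+B_{1}A_{n}^{*}$, and Lemma \ref{lem7} with the triangle inequality yields exactly the stated bounds (the case $n=1$ being trivial). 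No gaps.
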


 The result in next lemma from \cite{jahangiri, jmj} provide necessary and sufficient conditions in terms of coefficient for the functions in $SRH(\alpha)$ and $KRH(\alpha)$.
\begin{lemma}\label{lem6}
    Let $f= h + \bar g \in \hat{H}$ where $h$ and $g$ are given by \eqref{eq1}.  Then   $$f \in SRH(\alpha) \Leftrightarrow \sum_{n=2}^{\infty}((n-\alpha))|A_{n}|+\sum_{n=1}^{\infty}((n+\alpha))|B_{n}|\leq 1-\alpha,$$
 $$f \in KRH(\alpha) \Leftrightarrow
   \sum_{n=2}^{\infty}(n(n-\alpha))|A_{n}|+\sum_{n=1}^{\infty}(n(n+\alpha))|A_{n}|\leq 1-\alpha.$$
\end{lemma}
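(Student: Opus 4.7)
The plan is to establish each biconditional in two halves: the ``$\Leftarrow$'' directions are almost immediate from the previous lemmas, while the ``$\Rightarrow$'' directions require extracting a coefficient inequality from the defining geometric condition by specialising to the positive real axis and passing to the limit $r\to 1^{-}$.

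For the $(\Leftarrow)$ implications, the two inequalities in Lemma~\ref{lem6} are precisely the hypotheses of Lemma~\ref{lem1} (for $SRH(\alpha)$) and Lemma~\ref{lem2} (for $KRH(\alpha)$), which already place $f$ in $HS^{*}(\alpha)$ or $HK(\alpha)$ respectively. Because $h,g$ have the specific form \eqref{eq1}, membership of $f$ in the subclass $SRH(\alpha)$ or $KRH(\alpha)$ follows by definition, so no further argument is needed.

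For the $(\Rightarrow)$ direction of the starlike characterisation, I would start from
\[ \operatorname{Re}\!\left(\frac{zh'(z)-\overline{zg'(z)}}{h(z)+\overline{g(z)}}\right)>\alpha,\qquad z\in D, \]
and evaluate at $z=r\in(0,1)$. Since every $|A_{n}|,|B_{n}|$ is real and non-negative, numerator and denominator are both real. The denominator $r-\sum_{n\ge 2}|A_{n}|r^{n}+\sum_{n\ge 1}|B_{n}|r^{n}$ is strictly positive on $(0,1)$: it is positive near $0$, and cannot vanish because $f\in HS^{*}(\alpha)$ already forces $f(z)\neq 0$ on $D\setminus\{0\}$ (otherwise $\arg f$ would not even be defined). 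Clearing denominators, grouping like terms, dividing by $r$, and letting $r\to 1^{-}$ by monotone convergence produces the stated bound $\sum_{n\ge 2}(n-\alpha)|A_{n}|+\sum_{n\ge 1}(n+\alpha)|B_{n}|\le 1-\alpha$.

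For the convex case, I would use the harmonic analogue of Alexander's relation obtained by direct differentiation,
\[ \frac{\partial}{\partial\theta}\arg\frac{\partial f(re^{i\theta})}{\partial\theta}=\operatorname{Re}\!\left(\frac{zh'(z)+z^{2}h''(z)+\overline{zg'(z)+z^{2}g''(z)}}{zh'(z)-\overline{zg'(z)}}\right), \]
so that $f\in HK(\alpha)$ is exactly the starlike condition of order $\alpha$ applied to $F(z):=zh'(z)-\overline{zg'(z)}$, whose analytic and co-analytic coefficients are $n|A_{n}|$ and $-n|B_{n}|$ respectively. Repeating the real-axis calculation for $F$ and letting $r\to 1^{-}$ then delivers the claimed inequality $\sum n(n-\alpha)|A_{n}|+\sum n(n+\alpha)|B_{n}|\le 1-\alpha$. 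The main obstacle here is the sign bookkeeping: the co-analytic part of $F$ carries coefficients of the opposite sign to \eqref{eq1}, so the $SRH$ characterisation cannot be quoted verbatim; one must redo the evaluation and verify positivity of the denominator $r-\sum n|A_{n}|r^{n}-\sum n|B_{n}|r^{n}$ on $(0,1)$, itself a consequence of the a priori bound $\sum n|A_{n}|+\sum n|B_{n}|\le 1-\alpha$ implied by the convex hypothesis. The detailed verification is exactly the route followed in \cite{jahangiri,jmj}.
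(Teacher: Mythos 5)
The first thing to note is that the paper does not prove this lemma at all: it is imported from \cite{jahangiri,jmj} as a known characterisation, so there is no internal proof to measure yours against. Your $(\Leftarrow)$ halves (via Lemma~\ref{lem1} and Lemma~\ref{lem2}, plus the observation that a function already of the prescribed negative-coefficient form lands in the subclass by definition) and your $(\Rightarrow)$ half for $SRH(\alpha)$ are the standard Silverman--Jahangiri arguments and are correct. On the positive real axis the starlike condition becomes
\[
\frac{(1-\alpha)r-\sum_{n\ge2}(n-\alpha)|A_{n}|r^{n}-\sum_{n\ge1}(n+\alpha)|B_{n}|r^{n}}{r-\sum_{n\ge2}|A_{n}|r^{n}+\sum_{n\ge1}|B_{n}|r^{n}}\ \ge\ 0,
\]
and both coefficient families enter with the correct sign precisely because the co-analytic part carries a minus in the numerator $zh'-\overline{zg'}$ but a plus in the denominator $h+\bar g$; letting $r\to1^{-}$ finishes it.

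That sign conspiracy fails in your $KRH(\alpha)$ argument, and this is a genuine gap. Putting $z=r\in(0,1)$ in $\operatorname{Re}\bigl\{(zh'+z^{2}h''+\overline{zg'+z^{2}g''})/(zh'-\overline{zg'})\bigr\}\ge\alpha$, the numerator is $r-\sum n^{2}|A_{n}|r^{n}+\sum n^{2}|B_{n}|r^{n}$ and the denominator is $r-\sum n|A_{n}|r^{n}-\sum n|B_{n}|r^{n}$; clearing the (positive) denominator yields only
\[
\sum_{n\ge2}n(n-\alpha)|A_{n}|r^{n-1}-\sum_{n\ge1}n(n+\alpha)|B_{n}|r^{n-1}\ \le\ 1-\alpha,
\]
with a \emph{minus} sign on the $|B_{n}|$ sum --- strictly weaker than the asserted inequality. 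This is not curable by more careful bookkeeping: for $f(z)=z+b\bar z$ with $0<b<1$ the convexity quotient equals $(1-b^{2})/|e^{i\theta}-be^{-i\theta}|^{2}$, which is $(1+b)/(1-b)\ge1$ at $\theta=0$, so the positive real axis carries no information about $|B_{1}|$ whatsoever; the binding direction is $\theta=\pi/2$, where the quotient is $(1-b)/(1+b)$ and does force $(1+\alpha)|B_{1}|\le1-\alpha$. Since no single ray simultaneously flips the phase of every term $e^{-in\theta}$, ``repeating the real-axis calculation for $F=zh'-\overline{zg'}$'' cannot deliver the claimed bound, and --- as you yourself observe --- $F$ has co-analytic coefficients $-n|B_{n}|\le0$, so the $SRH(\alpha)$ characterisation cannot be quoted for it either. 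The necessity for $KRH(\alpha)$ genuinely requires the contradiction argument of \cite{jmj}, with the test point chosen adapted to the coefficients rather than fixed at $\theta=0$. (Two minor points: the positivity of the denominator is better deduced from local univalence, $|h'|>|g'|$, than from the coefficient bound you invoke, which is essentially the conclusion being proved; and the $|A_{n}|$ in the second sum of the stated $KRH(\alpha)$ condition is a typo for $|B_{n}|$, which you silently and correctly repaired.)
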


We shall also use the following lemma, through out the paper.
\begin{lemma}
  Assume that $\beta, \delta \geq 0$, $\alpha, \gamma> 0$.\\
       {\rm (i)}$\sum_{n=1}^{\infty}\left|\frac{n(n+1)\Gamma(\alpha)\Gamma(\gamma)}{\Gamma(\alpha  +n \beta)\Gamma(\gamma  +n \delta)}\right|=W_{{(\alpha, \beta)},{(\gamma, \delta)}}^{\prime\prime}(1)$.\\
      {\rm (ii)}$\sum_{n=1}^{\infty}\left|\frac{(n+1)\Gamma(\alpha)\Gamma(\gamma)}{\Gamma(\alpha  +n \beta)\Gamma(\gamma  +n \delta)}\right|=W_{{(\alpha, \beta)},({\gamma, \delta})}^{\prime}(1)-1$.\\ {\rm (iii)}$\sum_{n=0}^{\infty}\left|\frac{\Gamma(\alpha)\Gamma(\gamma)}{\Gamma(\alpha  +n \beta)\Gamma(\gamma  +n \delta)}\right|=W_{{(\alpha, \beta)},({\gamma, \delta})}(1)$.\\
       {\rm (iv)}$\sum_{n=0}^{\infty}\left|\frac{n(n+1)(n-1)\Gamma(\alpha)\Gamma(\gamma)}{\Gamma(\alpha  +n \beta)\Gamma(\gamma  +n \delta)}\right|=W_{{(\alpha, \beta)},{(\gamma, \delta)}}^{\prime\prime\prime}(1)$.
\end{lemma}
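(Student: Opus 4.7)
The plan is to prove each of (i)--(iv) by writing $W_{(\alpha,\beta),(\gamma,\delta)}$ as an explicit power series, differentiating term by term, evaluating at $z=1$, and then reindexing the resulting sum so it matches the left-hand side of each identity. Under the hypotheses $\alpha,\gamma>0$ and $\beta,\delta\ge 0$, every argument $\alpha+n\beta$ and $\gamma+n\delta$ of the Gamma functions is positive, so $\Gamma(\alpha+n\beta)\Gamma(\gamma+n\delta)>0$ for each $n\ge 0$. Consequently every summand on the left of (i)--(iv) is already positive, and the absolute value bars can be dropped without changing the value of the sum. This is the observation that lets us identify the sums with derivatives of $W$ rather than of some majorising series.

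Starting from
\begin{align*}
W_{(\alpha,\beta),(\gamma,\delta)}(z)=z+\sum_{m=2}^{\infty}\frac{\Gamma(\alpha)\Gamma(\gamma)}{\Gamma(\alpha+(m-1)\beta)\Gamma(\gamma+(m-1)\delta)}\,z^{m},
\end{align*}
I would first settle (iii) by setting $z=1$ and reindexing $n=m-1$, producing $\sum_{n=0}^{\infty}\Gamma(\alpha)\Gamma(\gamma)/[\Gamma(\alpha+n\beta)\Gamma(\gamma+n\delta)]$. Next, differentiating term by term gives
\begin{align*}
W'_{(\alpha,\beta),(\gamma,\delta)}(z)=1+\sum_{m=2}^{\infty}\frac{m\,\Gamma(\alpha)\Gamma(\gamma)}{\Gamma(\alpha+(m-1)\beta)\Gamma(\gamma+(m-1)\delta)}\,z^{m-1};
\end{align*}
evaluating at $z=1$ and substituting $n=m-1$ yields (ii) after moving the constant $1$ to the left side. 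Differentiating once more and putting $n=m-1$ produces $\sum_{n=1}^{\infty} n(n+1)\,\Gamma(\alpha)\Gamma(\gamma)/[\Gamma(\alpha+n\beta)\Gamma(\gamma+n\delta)]$, which is exactly (i). For (iv), the third derivative gives the factor $m(m-1)(m-2)$; after reindexing this becomes $(n+1)n(n-1)$, and since the terms with $n=0$ and $n=1$ vanish, extending the sum down to $n=0$ is free, matching the stated form.

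The only point that requires justification beyond bookkeeping is the term-by-term differentiation. Here I would appeal to the fact (quoted earlier from \cite{luchko, mehrez}) that $W_{(\alpha,\beta),(\gamma,\delta)}$ is an entire function whenever $\beta+\delta>0$, so the power series representing it and all of its derivatives converge absolutely in every compact subset of $\mathbb{C}$, and in particular at $z=1$; term-by-term differentiation is then standard. The boundary case $\beta=\delta=0$ is excluded by the positivity of at least one of $\beta,\delta$ implicit in the intended use of the lemma (otherwise the series (iii) reduces to a divergent geometric-type series). I do not expect any substantive obstacle: the proof is essentially a bookkeeping exercise in index shifts, and the main thing to be careful about is the off-by-one arising from the normalization $W=z\,\Gamma(\alpha)\Gamma(\gamma)\,\mathcal{W}$, which shifts the indexing by one so that the left-hand sums (in $n$) and the derivatives of $W$ (in $m=n+1$) line up correctly.
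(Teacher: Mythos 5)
Your proposal is correct: the paper states this lemma without any proof, and the term-by-term differentiation of the normalized series, evaluation at $z=1$, and the index shift $n=m-1$ (together with the observation that positivity of $\Gamma(\alpha+n\beta)\Gamma(\gamma+n\delta)$ makes the absolute values redundant, and that the $n=0,1$ terms in (iv) vanish) is exactly the computation the paper implicitly relies on. Your remark that the case $\beta=\delta=0$ must be excluded for convergence is a fair point that the paper's hypothesis $\beta,\delta\ge 0$ glosses over.
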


In the next section, we discuss some sufficient conditions in such a way that the harmonic mapping involving four-parameter Wright function satisfies harmonic starlikeness.

\section{Harmonic Starlikeness}\label{sec3}
\begin{theorem}
    Suppose that $\beta_{1},\beta_{2}, \delta_{1}, \delta_{2}\geq 0$, $\alpha_{1},\alpha_{2}, \gamma_{1}, \gamma_{2}> 0$ and $|\sigma|<1$. Also assume $f= h + \bar g \in \hat{H}$ if the inequality $|A_{n}|\leq 1, |B_{n}|\leq 1$ and \\$$ (W_{{(\alpha_{1}, \beta_{1})},({\gamma_{1}, \delta_{1}})}^{\prime}(1)-1) -\alpha(W_{{(\alpha_{1}, \beta_{1})},({\gamma_{1}, \delta_{1}})}(1)-1)+|\sigma|(W_{{(\alpha_{2}, \beta_{2})},({\gamma_{2}, \delta_2})}^{\prime}(1) + \alpha W_{{(\alpha_{2}, \beta_{2})},({\gamma_{2}, \delta_{2}})}(1))\leq (1-\alpha)$$ are satisfied then $L(f)$ is harmonic starlike function, i.e $L(f)\in HS^{*}(\alpha)$.
\end{theorem}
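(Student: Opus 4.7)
\medskip

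\noindent\textbf{Proof plan.} The natural route is to verify the coefficient inequality of Jahangiri's criterion (Lemma \ref{lem1}) for the harmonic mapping $L(f)=H+\overline{\sigma G}$, where $H$ and $G$ are as in \eqref{eqHarmonic}. The coefficients of $H$ are $\widetilde A_n=\frac{\Gamma(\alpha_1)\Gamma(\gamma_1)A_n}{\Gamma(\alpha_1+(n-1)\beta_1)\Gamma(\gamma_1+(n-1)\delta_1)}$ for $n\ge 2$, and the coefficients of $\sigma G$ are $\widetilde B_n=\frac{\sigma\,\Gamma(\alpha_2)\Gamma(\gamma_2)B_n}{\Gamma(\alpha_2+(n-1)\beta_2)\Gamma(\gamma_2+(n-1)\delta_2)}$ for $n\ge 1$. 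To apply Lemma \ref{lem1} I must show
\[
S := \sum_{n=2}^{\infty}(n-\alpha)|\widetilde A_n| + \sum_{n=1}^{\infty}(n+\alpha)|\widetilde B_n| \le 1-\alpha.
\]

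\noindent\textbf{Reduction and bounding.} First I invoke the hypotheses $|A_n|\le 1$ and $|B_n|\le 1$ to pull absolute values of the Wright coefficients out of $S$, giving
\[
S \le \sum_{n=2}^{\infty}(n-\alpha)\frac{\Gamma(\alpha_1)\Gamma(\gamma_1)}{\Gamma(\alpha_1+(n-1)\beta_1)\Gamma(\gamma_1+(n-1)\delta_1)} + |\sigma|\sum_{n=1}^{\infty}(n+\alpha)\frac{\Gamma(\alpha_2)\Gamma(\gamma_2)}{\Gamma(\alpha_2+(n-1)\beta_2)\Gamma(\gamma_2+(n-1)\delta_2)}.
\]
Then I reindex each sum with $m=n-1$ and split the $n$-factor via $n-\alpha=(m+1)-\alpha$ (respectively $n+\alpha=(m+1)+\alpha$). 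For the first sum the range becomes $m\ge 1$; for the second, $m\ge 0$, and the $m=0$ term of the $(m+1)$-part contributes exactly the constant $1$ that upgrades $W'(1)-1$ to $W'(1)$. This is the step where careful bookkeeping with the index shift is essential, and is where I expect most of the work to lie.

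\noindent\textbf{Closed-form identification.} Parts (ii) and (iii) of the final preparatory lemma now identify the four resulting series as values of $W$ and $W'$ at $1$. Explicitly, the first sum collapses to $(W'_{(\alpha_1,\beta_1),(\gamma_1,\delta_1)}(1)-1)-\alpha\,(W_{(\alpha_1,\beta_1),(\gamma_1,\delta_1)}(1)-1)$, and the second to $|\sigma|\bigl(W'_{(\alpha_2,\beta_2),(\gamma_2,\delta_2)}(1)+\alpha\,W_{(\alpha_2,\beta_2),(\gamma_2,\delta_2)}(1)\bigr)$. The sum of these two is exactly the left-hand side in the hypothesis, which is assumed $\le 1-\alpha$.

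\noindent\textbf{Conclusion.} With $S\le 1-\alpha$ established, Lemma \ref{lem1} yields that $L(f)$ is sense-preserving, univalent, and harmonic starlike of order $\alpha$, i.e.\ $L(f)\in HS^{*}(\alpha)$. The only real obstacle is making the two index shifts align cleanly with identities (ii) and (iii) so that the $\alpha$-dependent and $\alpha$-independent pieces separate into the exact combination appearing in the hypothesis; beyond this, the argument is a direct application of the criterion.
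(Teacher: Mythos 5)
Your proposal is correct and follows essentially the same route as the paper: apply Jahangiri's coefficient criterion (Lemma \ref{lem1}) to $L(f)$, bound the coefficient sum using $|A_n|\le 1$, $|B_n|\le 1$, reindex by $m=n-1$, and identify the resulting series with $W(1)$ and $W'(1)$ via parts (ii) and (iii) of the preparatory lemma. Your bookkeeping of the index shift (in particular that the $m=0$ term upgrades $W'(1)-1$ to $W'(1)$ only in the co-analytic sum) matches the paper's computation exactly.
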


\begin{proof}
Let $f= h + \bar g \in \hat{H}$, where $h$ and $g$ given by the equation \eqref{eqharmonic}. We have to prove $L(f)\in HS^{*}(\alpha)$, where $$L(f)=z+\sum_{n=2}^{\infty}\frac{\Gamma(\alpha_{1})\Gamma(\gamma_{1})A_{n}z^{n}}{\Gamma(\alpha_{1}  +(n-1)\beta_{1})\Gamma(\gamma_{1}  +(n-1) \delta_{1})}+\sum_{n=1}^{\infty}\frac{\Gamma(\alpha_{2})\Gamma(\gamma_{2})B_{n}z^{n}}{\Gamma(\alpha_{2}  +(n-1)\beta_{2})\Gamma(\gamma_{2}  +(n-1) \delta_{2})}.$$
    In view of lemma \ref{lem1},
it suffices to show that \begin{align*}
        S_{2}=&\sum_{n=2}^{\infty}(n-\alpha)\left|\frac{\Gamma(\alpha_{1})\Gamma(\gamma_{1})}{\Gamma(\alpha_{1}  +(n-1)\beta_{1})\Gamma(\gamma_{1}  +(n-1) \delta_{1})}\right||A_{n}|\\
        &+|\sigma|\sum_{n=1}^{\infty}(n+\alpha)\left|\frac{\Gamma(\alpha_{2})\Gamma(\gamma_{2})}{\Gamma(\alpha_{2}  +(n-1)\beta_{2})\Gamma(\gamma_{2}  +(n-1) \delta_{2})}\right||B_{n}|\leq (1-\alpha).
        \end{align*}
        Now, we have
        \begin{align*}
        S_{2} &  \leq \sum_{n=1}^{\infty}(n+1-\alpha)\left|\frac{\Gamma(\alpha_{1})\Gamma(\gamma_{1})}{\Gamma(\alpha_{1}  +n \beta_{1})\Gamma(\gamma_{1}  +n \delta_{1})}\right|\\
        & + |\sigma|\sum_{n=0}^{\infty}(n+1+\alpha)\left|\frac{\Gamma(\alpha_{2})\Gamma(\gamma_{2})}{\Gamma(\alpha_{2}  +n\beta_{2})\Gamma(\gamma_{2}  +n \delta_{2})}\right|=\\
        &  (W_{{(\alpha_{1}, \beta_{1})},({\gamma_{1}, \delta_{1}})}^{\prime}(1)-1) -\alpha(W_{{(\alpha_{1}, \beta_{1})},({\gamma_{1}, \delta_{1}})}(1)-1)+|\sigma|(W_{{(\alpha_{2}, \beta_{2})},({\gamma_{2}, \delta_2})}^{\prime}(1) + \alpha W_{{(\alpha_{2}, \beta_{2})},({\gamma_{2}, \delta_{2}})}(1)).
    \end{align*}
    As a result, by the given conditions,  the desired outcome has been obtained.
\end{proof}

\begin{corollary}\label{cor1}
Setting  $\beta_{1}, \beta_{2}, \delta_{1}=\delta_{2}=1\geq 0$, $\alpha_{1},\alpha_{2}, \gamma_{1}= \gamma_{2}=1> 0$ and $|\sigma|<1$ in above theorem. Also assume $f= h + \bar g \in \hat{H}$ if the inequality $|A_{n}|\leq 1, |B_{n}|\leq 1$ and \\$$ (W_{{(\alpha_{1}, \beta_{1})},(1, 1)}^{\prime}(1)-1) -\alpha(W_{{(\alpha_{1}, \beta_{1})},(1, 1)}(1)-1)+|\sigma|(W_{{(\alpha_{2}, \beta_{2})},(1,1)}^{\prime}(1) + \alpha W_{{(\alpha_{2}, \beta_{2})},(1,1)}(1)\leq (1-\alpha)$$ are satisfied then $L(f)$ given by \eqref{eqwright}is harmonic starlike function, i.e $L(f)\in HS^{*}(\alpha)$.
\end{corollary}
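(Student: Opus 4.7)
The plan is to recognize that Corollary \ref{cor1} is nothing more than a direct specialization of the preceding Theorem, obtained by fixing $\gamma_{1}=\gamma_{2}=1$ and $\delta_{1}=\delta_{2}=1$. So the proof strategy reduces to a clean bookkeeping argument: show that both the operator $L(f)$ and the hypothesis inequality in the Corollary are exactly what the Theorem produces under this parameter choice, and then invoke the Theorem.

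First I would verify the reduction of the generalized Wright building block. When $\gamma=\delta=1$ we have $\Gamma(\gamma)=1$ and $\Gamma(\gamma+(n-1)\delta)=\Gamma(n)=(n-1)!$, so the factor $\Gamma(\gamma_{j})/\Gamma(\gamma_{j}+(n-1)\delta_{j})$ in the $n$-th coefficient of $H(z)$ and $G(z)$ collapses to $1/(n-1)!$. This shows that the four-parameter normalized Wright function $W_{(\alpha_{j},\beta_{j}),(\gamma_{j},\delta_{j})}(z)$ reduces to the two-parameter normalized Wright function $W_{(\alpha_{j},\beta_{j}),(1,1)}(z)$, and therefore the operator $L(f)$ written out in the Corollary coincides with the operator $L(f)$ appearing in the Theorem. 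In particular the expression \eqref{eqwright} is just \eqref{eqHarmonic} evaluated at $\gamma_{j}=\delta_{j}=1$.

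Next, under the same substitution, the quantities $W_{(\alpha_{2},\beta_{2}),(\gamma_{2},\delta_{2})}^{\prime}(1)$, $W_{(\alpha_{2},\beta_{2}),(\gamma_{2},\delta_{2})}(1)$, $W_{(\alpha_{1},\beta_{1}),(\gamma_{1},\delta_{1})}^{\prime}(1)$, and $W_{(\alpha_{1},\beta_{1}),(\gamma_{1},\delta_{1})}(1)$ appearing in the hypothesis of the Theorem become $W_{(\alpha_{2},\beta_{2}),(1,1)}^{\prime}(1)$, $W_{(\alpha_{2},\beta_{2}),(1,1)}(1)$, $W_{(\alpha_{1},\beta_{1}),(1,1)}^{\prime}(1)$, and $W_{(\alpha_{1},\beta_{1}),(1,1)}(1)$. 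Substituting these into the inequality of the Theorem reproduces verbatim the inequality stated in Corollary \ref{cor1}. Since the side conditions $|A_{n}|\leq 1$, $|B_{n}|\leq 1$, $|\sigma|<1$, $\beta_{1},\beta_{2}\geq 0$, and $\alpha_{1},\alpha_{2}>0$ are kept intact, all hypotheses of the Theorem are satisfied, and the Theorem delivers $L(f)\in HS^{*}(\alpha)$ directly.

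There is essentially no analytic obstacle here; the only place one must be careful is in the notational matching between the general four-parameter framework and its $(\gamma,\delta)=(1,1)$ reduction, in particular making sure the gamma-factor cancellation is applied consistently in both the series for $H,G$ and in the series sums evaluated at $z=1$. Once that matching is written down cleanly, the Corollary follows with no further estimates.
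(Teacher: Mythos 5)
Your proposal is correct and matches the paper's treatment: the corollary is stated as an immediate specialization of the preceding theorem at $\gamma_{1}=\gamma_{2}=\delta_{1}=\delta_{2}=1$, and the paper offers no separate proof beyond this. Your explicit check that $\Gamma(1+(n-1))=(n-1)!$ makes the series and the evaluated quantities at $z=1$ reduce consistently is exactly the bookkeeping the paper leaves implicit.
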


\begin{theorem}
Suppose that $\beta_{1},\beta_{2}, \delta_{1}, \delta_{2}\geq 0$, $\alpha_{1},\alpha_{2}, \gamma_{1}, \gamma_{2}> 0$ and $|\sigma|<1$. Also assume $f= h + \bar g \in SRH(\alpha)$ if the inequality
\begin{equation}\label{eq1}
W_{{(\alpha_{1}, \beta_{1})},({\gamma_{1}, \delta_{1}})}(1)+|\sigma|W_{{(\alpha_{2}, \beta_{2})},({\gamma_{2}, \delta_{2}})}(1) \leq 2
\end{equation}
satisfied, then   $L(f) \in SRH(\alpha)$.   
\end{theorem}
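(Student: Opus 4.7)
The plan is to verify the single coefficient inequality that Lemma \ref{lem6} identifies as characterizing $SRH(\alpha)$, now applied to $L(f)$ instead of $f$. Reading off the coefficients of $L(f)$ from its convolution formula \eqref{eqHarmonic}, and noting that under the hypotheses $\alpha_i,\gamma_i>0$ and $\beta_i,\delta_i\geq 0$ every argument of $\Gamma$ that appears is positive (so every gamma ratio is a positive real), the criterion of Lemma \ref{lem6} applied to $L(f)$ reduces to showing
$$S := \sum_{n=2}^{\infty}(n-\alpha)\,\frac{\Gamma(\alpha_1)\Gamma(\gamma_1)\,|A_n|}{\Gamma(\alpha_1+(n-1)\beta_1)\Gamma(\gamma_1+(n-1)\delta_1)} + |\sigma|\sum_{n=1}^{\infty}(n+\alpha)\,\frac{\Gamma(\alpha_2)\Gamma(\gamma_2)\,|B_n|}{\Gamma(\alpha_2+(n-1)\beta_2)\Gamma(\gamma_2+(n-1)\delta_2)} \leq 1-\alpha.$$

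The key observation will be a termwise bound coming from the other direction of Lemma \ref{lem6}: since $f\in SRH(\alpha)$, one has $\sum_{n\geq 2}(n-\alpha)|A_n|+\sum_{n\geq 1}(n+\alpha)|B_n|\leq 1-\alpha$, and because each summand is nonnegative this forces the pointwise inequalities $(n-\alpha)|A_n|\leq 1-\alpha$ for every $n\geq 2$ and $(n+\alpha)|B_n|\leq 1-\alpha$ for every $n\geq 1$. Substituting these into $S$ cancels the $(n\pm\alpha)$ factors and leaves only pure gamma-ratio sums, giving
$$S \leq (1-\alpha)\Biggl[\sum_{n=2}^{\infty}\frac{\Gamma(\alpha_1)\Gamma(\gamma_1)}{\Gamma(\alpha_1+(n-1)\beta_1)\Gamma(\gamma_1+(n-1)\delta_1)} + |\sigma|\sum_{n=1}^{\infty}\frac{\Gamma(\alpha_2)\Gamma(\gamma_2)}{\Gamma(\alpha_2+(n-1)\beta_2)\Gamma(\gamma_2+(n-1)\delta_2)}\Biggr].$$

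To finish, I would reindex each inner sum via $m=n-1$ and invoke part (iii) of the summation lemma at the end of Section \ref{sec2}, which identifies $\sum_{m\geq 0}\Gamma(\alpha)\Gamma(\gamma)/[\Gamma(\alpha+m\beta)\Gamma(\gamma+m\delta)]$ with $W_{(\alpha,\beta),(\gamma,\delta)}(1)$. The second bracketed sum becomes exactly $W_{(\alpha_2,\beta_2),(\gamma_2,\delta_2)}(1)$, and the first becomes $W_{(\alpha_1,\beta_1),(\gamma_1,\delta_1)}(1)-1$ since the $m=0$ term (which equals $1$) is excluded by the lower index. Hence
$$S \leq (1-\alpha)\bigl(W_{(\alpha_1,\beta_1),(\gamma_1,\delta_1)}(1) - 1 + |\sigma|W_{(\alpha_2,\beta_2),(\gamma_2,\delta_2)}(1)\bigr),$$
and the standing hypothesis \eqref{eq1} forces the parenthesis to be at most $1$, so $S\leq 1-\alpha$. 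Lemma \ref{lem6}, applied to $L(f)$, then yields $L(f)\in SRH(\alpha)$. There is no serious obstacle here — the computation is essentially mechanical — but care is needed to deploy the pointwise estimate $(n-\alpha)|A_n|,(n+\alpha)|B_n|\leq 1-\alpha$ cleanly, so that no residual $n$-factor prevents matching the resulting sums against $W(1)$ rather than its derivatives.
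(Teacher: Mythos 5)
Your proposal is correct and follows the same route as the paper: extract the coefficient bound from the necessity half of Lemma \ref{lem6}, feed it into the sufficiency half applied to $L(f)$, reindex, and identify the resulting gamma-ratio sums with $W(1)$ via part (iii) of the summation lemma. In fact your execution is tighter than the paper's at exactly the two points you flag at the end. First, the paper discards the factor $1-\alpha$ when it bounds $(n-\alpha)|A_n|$ and $(n+\alpha)|B_n|$, whereas you keep the pointwise estimate $(n\mp\alpha)|A_n|,|B_n|\le 1-\alpha$; second, the paper identifies $\sum_{n\ge 1}\Gamma(\alpha_1)\Gamma(\gamma_1)/[\Gamma(\alpha_1+n\beta_1)\Gamma(\gamma_1+n\delta_1)]$ with $W_{(\alpha_1,\beta_1),(\gamma_1,\delta_1)}(1)$ rather than $W_{(\alpha_1,\beta_1),(\gamma_1,\delta_1)}(1)-1$. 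As a result the paper's final display only gives $P_1\le W_1(1)+|\sigma|W_2(1)\le 2$, which does not by itself yield the needed bound $P_1\le 1-\alpha$; your version, $S\le(1-\alpha)\bigl(W_1(1)-1+|\sigma|W_2(1)\bigr)\le 1-\alpha$, is the one that actually makes the hypothesis $W_1(1)+|\sigma|W_2(1)\le 2$ do its job. The only cosmetic point worth adding is a sentence confirming that $L(f)$ inherits the sign-normalized form \eqref{eq1} required by Lemma \ref{lem6} (it does, since the gamma ratios are positive and multiply the coefficients of $f$).
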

\begin{proof}
  Let us consider  $f = h + \bar g\in SRH(\alpha)$ and in order by using lemma \ref{lem6}, $$\sum_{n=2}^{\infty}(n-\alpha)|A_{n}|+\sum_{n=1}^{\infty}(n+\alpha)|B_{n}|\leq 1-\alpha.$$
  Clearly, we get
  $$|A_{n}|\leq \frac{1-\alpha}{n(n-\alpha)} \quad \forall n \geq 2 \quad and \quad |B_{n}|\leq \frac{1-\alpha}{n(n+\alpha)} \quad \forall n \geq 1.$$
  We have, $$L(f)=z+\sum_{n=2}^{\infty}\frac{\Gamma(\alpha_{1})\Gamma(\gamma_{1})A_{n}z^{n}}{\Gamma(\alpha_{1}  +(n-1)\beta_{1})\Gamma(\gamma_{1}  +(n-1) \delta_{1})}+\sigma\sum_{n=1}^{\infty}\frac{\Gamma(\alpha_{2})\Gamma(\gamma_{2})B_{n}z^{n}}{\Gamma(\alpha_{2}  +(n-1)\beta_{2})\Gamma(\gamma_{2}  +(n-1) \delta_{2})}.$$
It can be noted that by lemma \ref{lem6}, it is enough to prove that 
\begin{align*}
   & P_{1}= \sum_{n=2}^{\infty}(n-\alpha)\left|\frac{\Gamma(\alpha_{1})\Gamma(\gamma_{1})}{\Gamma(\alpha_{1}  +(n-1)\beta_{1})\Gamma(\gamma_{1}  +(n-1) \delta_{1})}\right||A_{n}|\\
&+|\sigma|\sum_{n=1}^{\infty}(n+\alpha)\left|\frac{\Gamma(\alpha_{2})\Gamma(\gamma_{2})}{\Gamma(\alpha_{2}  +(n-1)\beta_{2})\Gamma(\gamma_{2}  +(n-1) \delta_{2})}\right||B_{n}|\leq 1-\alpha.
\end{align*}
We consider,
\begin{align*}
        & P_{1}\leq  \sum_{n=2}^{\infty}\left|\frac{\Gamma(\alpha_{1})\Gamma(\gamma_{1})}{\Gamma(\alpha_{1}  +(n-1)\beta_{1})\Gamma(\gamma_{1}  +(n-1) \delta_{1})}\right|+|\sigma|\sum_{n=1}^{\infty}\left|\frac{\Gamma(\alpha_{2})\Gamma(\gamma_{2})}{\Gamma(\alpha_{2}  +(n-1)\beta_{2})\Gamma(\gamma_{2}  +(n-1) \delta_{2})}\right|\\
        & = \sum_{n=1}^{\infty}\left|\frac{\Gamma(\alpha_{1})\Gamma(\gamma_{1})}{\Gamma(\alpha_{1}  +n\beta_{1})\Gamma(\gamma_{1}  +n \delta_{1})}\right|+|\sigma|\sum_{n=0}^{\infty}\left|\frac{\Gamma(\alpha_{2})\Gamma(\gamma_{2})}{\Gamma(\alpha_{2}  +n\beta_{2})\Gamma(\gamma_{2}  +n \delta_{2})}\right|\\
        &=W_{{(\alpha_{1}, \beta_{1})},({\gamma_{1}, \delta_{1}})}(1)+|\sigma|W_{{(\alpha_{2}, \beta_{2})},({\gamma_{2}, \delta_{2}})}(1).   \end{align*}
 Using the asserted condition of theorem, it is readily to follow the desired result. 
 \end{proof}
In the next theorem determine sufficient condition which assure that a harmonic convex function map unit disk to $D$ into harmonic starlike region.
\begin{theorem}
Suppose that $\beta_{1},\beta_{2}, \delta_{1}, \delta_{2}\geq 0$, $\alpha_{1},\alpha_{2}, \gamma_{1}, \gamma_{2}> 0$ and $|\sigma|<1$. Also assume $f= h + \bar g \in  K_{\hat{H}^{o}}$ if the inequality
\begin{align*}
&W^{\prime\prime}_{{(\alpha_{1}, \beta_{1})},({\gamma_{1}, \delta_{1}})}(1)+(2-\alpha)(W^{\prime}_{{(\alpha_{1}, \beta_{1})},({\gamma_{1}, \delta_{1}})}(1)-1)-\alpha(W^{\prime}_{{(\alpha_{1}, \beta_{1})},({\gamma_{1}, \delta_{1}})}(1)-1)\\
        &+|\sigma|W^{\prime\prime}_{{(\alpha_{2}, \beta_{2})},({\gamma_{2}, \delta_{2}})}(1) +\alpha(W^{\prime}_{{(\alpha_{2}, \beta_{2})},({\gamma_{2}, \delta_{2}})}(1)-W_{{(\alpha_{2}, \beta_{2})},({\gamma_{2}, \delta_{2}})}(1))\leq 2(1-\alpha),
\end{align*}
then   $L(f) \in H + \bar G \in HS^{*}(\alpha).$
\end{theorem}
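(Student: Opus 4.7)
The plan is to reduce the problem to Lemma \ref{lem1} applied to $L(f)=H+\overline{\sigma G}$, and then use the coefficient bounds from Lemma \ref{lem4} together with the generating identities in the final lemma of Section \ref{sec2} to rewrite the resulting series in closed form.

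First, since $f=h+\bar g\in K_{\hat H^{o}}$, Lemma \ref{lem4} supplies $|A_n|\le (n+1)/2$ for $n\ge 2$ and $|B_n|\le (n-1)/2$ for $n\ge 1$ (so the $n=1$ contribution on the $g$-side drops out automatically). By Lemma \ref{lem1} applied to $L(f)$, it is enough to bound
\begin{align*}
S\;=\;&\sum_{n=2}^{\infty}(n-\alpha)\,\frac{\Gamma(\alpha_{1})\Gamma(\gamma_{1})}{\Gamma(\alpha_{1}+(n-1)\beta_{1})\Gamma(\gamma_{1}+(n-1)\delta_{1})}\,|A_{n}|\\
&+|\sigma|\sum_{n=1}^{\infty}(n+\alpha)\,\frac{\Gamma(\alpha_{2})\Gamma(\gamma_{2})}{\Gamma(\alpha_{2}+(n-1)\beta_{2})\Gamma(\gamma_{2}+(n-1)\delta_{2})}\,|B_{n}|
\end{align*}
by $1-\alpha$. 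Substituting the convex bounds gives the majorant
$S\le \tfrac12 S_{h}+\tfrac{|\sigma|}{2}S_{g}$, where $S_h$ involves $(n-\alpha)(n+1)$ and $S_g$ involves $(n+\alpha)(n-1)$ against the same gamma ratios.

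Next, I would shift the summation index $n\mapsto k+1$ so that the gamma denominators become $\Gamma(\alpha_i+k\beta_i)\Gamma(\gamma_i+k\delta_i)$ with $k\ge 1$. Then I would expand
\[
(k+1-\alpha)(k+2)=k(k+1)+(2-\alpha)(k+1)-\alpha,
\]
\[
(k+1+\alpha)\,k=k(k+1)+\alpha\bigl[(k+1)-1\bigr],
\]
so that each sum splits into three pieces whose gamma-ratio series are recognised via parts (i)--(iii) of the final lemma as $W^{\prime\prime}_{(\cdot,\cdot),(\cdot,\cdot)}(1)$, $W^{\prime}_{(\cdot,\cdot),(\cdot,\cdot)}(1)-1$, and $W_{(\cdot,\cdot),(\cdot,\cdot)}(1)-1$ respectively (the ``$-1$'' appearing because the lemma identity (iii) starts from $n=0$ while our shifted sum begins at $k=1$).

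Carrying this out yields
\begin{align*}
2S\le\; & W^{\prime\prime}_{(\alpha_{1},\beta_{1}),(\gamma_{1},\delta_{1})}(1)+(2-\alpha)\bigl(W^{\prime}_{(\alpha_{1},\beta_{1}),(\gamma_{1},\delta_{1})}(1)-1\bigr)-\alpha\bigl(W_{(\alpha_{1},\beta_{1}),(\gamma_{1},\delta_{1})}(1)-1\bigr)\\
&+|\sigma|\Bigl[W^{\prime\prime}_{(\alpha_{2},\beta_{2}),(\gamma_{2},\delta_{2})}(1)+\alpha\bigl(W^{\prime}_{(\alpha_{2},\beta_{2}),(\gamma_{2},\delta_{2})}(1)-W_{(\alpha_{2},\beta_{2}),(\gamma_{2},\delta_{2})}(1)\bigr)\Bigr],
\end{align*}
which is exactly twice the quantity assumed in the hypothesis to be at most $2(1-\alpha)$. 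Hence $S\le 1-\alpha$, so Lemma \ref{lem1} gives $L(f)\in HS^{*}(\alpha)$.

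The only real obstacle is bookkeeping: getting the polynomial expansions of $(k+1-\alpha)(k+2)$ and $(k+1+\alpha)k$ to line up cleanly with the three generating identities, and tracking the constant shifts arising because identity (iii) in the final lemma starts at $n=0$ while the summations produced by the index shift start at $k=1$. Apart from that, the argument is a direct application of Lemmas \ref{lem1} and \ref{lem4}.
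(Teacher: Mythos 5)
Your proposal is correct and follows essentially the same route as the paper's own proof: Lemma \ref{lem4} for the bounds $|A_n|\le (n+1)/2$, $|B_n|\le (n-1)/2$, Lemma \ref{lem1} as the sufficient coefficient criterion, the index shift, and the splittings $(k+1-\alpha)(k+2)=k(k+1)+(2-\alpha)(k+1)-\alpha$ and $(k+1+\alpha)k=k(k+1)+\alpha[(k+1)-1]$ matched against the generating identities (i)--(iii). The one caveat is that your closed form, namely $-\alpha\bigl(W_{(\alpha_{1},\beta_{1}),(\gamma_{1},\delta_{1})}(1)-1\bigr)$ with the entire second bracket carrying the factor $|\sigma|$, is what the computation actually yields via identity (iii), whereas the theorem's displayed hypothesis writes $-\alpha\bigl(W^{\prime}_{(\alpha_{1},\beta_{1}),(\gamma_{1},\delta_{1})}(1)-1\bigr)$ and places $\alpha\bigl(W^{\prime}_{(\alpha_{2},\beta_{2}),(\gamma_{2},\delta_{2})}(1)-W_{(\alpha_{2},\beta_{2}),(\gamma_{2},\delta_{2})}(1)\bigr)$ outside $|\sigma|$; these appear to be typos in the paper (its own proof exhibits the same mismatch between the derived series and the labelled closed form), so your claimed \emph{exact} agreement with the stated hypothesis holds only after correcting them.
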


    \begin{proof}
  Let $f= h + \bar g \in  K_{\hat{H}^{o}}$, where $h$ and $g$ given by the equation \eqref{eqharmonic}  and in view of lemma \ref{lem4},
  it is clear that 
  $$|A_{n}|\leq \frac{(n+1)}{2}, \quad |B_{n}|\leq \frac{n-1}{2}, \quad n=1,2,3,\cdots.$$\\
  Now, we have $$L(f)=z+\sum_{n=2}^{\infty}\frac{\Gamma(\alpha_{1})\Gamma(\gamma_{1})A_{n}z^{n}}{\Gamma(\alpha_{1}  +(n-1)\beta_{1})\Gamma(\gamma_{1}  +(n-1) \delta_{1})}+\sigma\sum_{n=1}^{\infty}\frac{\Gamma(\alpha_{2})\Gamma(\gamma_{2})B_{n}z^{n}}{\Gamma(\alpha_{2}  +(n-1)\beta_{2})\Gamma(\gamma_{2}  +(n-1) \delta_{2})}.$$
It can be noted that by lemma \ref{lem1}, sufficient to prove that 
\begin{align*}
   & P_{1}= \sum_{n=2}^{\infty}(n-\alpha)\left|\frac{\Gamma(\alpha_{1})\Gamma(\gamma_{1})}{\Gamma(\alpha_{1}  +(n-1)\beta_{1})\Gamma(\gamma_{1}  +(n-1) \delta_{1})}\right||A_{n}|\\
        &+|\sigma|\sum_{n=1}^{\infty}(n+\alpha)\left|\frac{\Gamma(\alpha_{2})\Gamma(\gamma_{2})}{\Gamma(\alpha_{2}  +(n-1)\beta_{2})\Gamma(\gamma_{2}  +(n-1) \delta_{2})}\right||B_{n}|\leq 1-\alpha.
        \end{align*}
        Now, we get
        \begin{align*}
        & P_{1}\leq  \frac{1}{2}\left(\sum_{n=2}^{\infty}\left|\frac{(n-\alpha)(n+1)\Gamma(\alpha_{1})\Gamma(\gamma_{1})}{\Gamma(\alpha_{1}  +(n-1)\beta_{1})\Gamma(\gamma_{1}  +(n-1) \delta_{1})}\right|+|\sigma|\sum_{n=1}^{\infty}\left|\frac{(n+\alpha)(n-1)(\Gamma(\alpha_{2})\Gamma(\gamma_{2})}{\Gamma(\alpha_{2}  +(n-1)\beta_{2})\Gamma(\gamma_{2}  +(n-1) \delta_{2})}\right|\right)\\
        & =\frac{1}{2}\left(\sum_{n=1}^{\infty}\left|\frac{(n+1-\alpha)(n+2)\Gamma(\alpha_{1})\Gamma(\gamma_{1})}{\Gamma(\alpha_{1}  +n\beta_{1})\Gamma(\gamma_{1}  +n \delta_{1})}\right|+|\sigma|\sum_{n=0}^{\infty}\left|\frac{(n+1+\alpha)n(\Gamma(\alpha_{2})\Gamma(\gamma_{2})}{\Gamma(\alpha_{2}  +n\beta_{2})\Gamma(\gamma_{2}  +n \delta_{2})}\right|\right)\\
        & =\frac{1}{2}\left(\sum_{n=1}^{\infty}\left|\frac{(n(n+1)+(n+1)(2-\alpha)-\alpha)\Gamma(\alpha_{1})\Gamma(\gamma_{1})}{\Gamma(\alpha_{1}  +n\beta_{1})\Gamma(\gamma_{1}  +n \delta_{1})}\right|+|\sigma|\sum_{n=0}^{\infty}\left|\frac{(n(n+1)+\alpha(n+1)-\alpha)(\Gamma(\alpha_{2})\Gamma(\gamma_{2})}{\Gamma(\alpha_{2}  +n\beta_{2})\Gamma(\gamma_{2}  +n \delta_{2})}\right|\right)\\
        &=\frac{1}{2}\left(W^{\prime\prime}_{{(\alpha_{1}, \beta_{1})},({\gamma_{1}, \delta_{1}})}(1)+(2-\alpha)(W^{\prime}_{{(\alpha_{1}, \beta_{1})},({\gamma_{1}, \delta_{1}})}(1)-1)-\alpha(W^{\prime}_{{(\alpha_{1}, \beta_{1})},({\gamma_{1}, \delta_{1}})}(1)-1)\right)\\
        &+\frac{1}{2}\left(|\sigma|W^{\prime\prime}_{{(\alpha_{2}, \beta_{2})},({\gamma_{2}, \delta_{2}})}(1) +\alpha(W^{\prime}_{{(\alpha_{2}, \beta_{2})},({\gamma_{2}, \delta_{2}})}(1)-W_{{(\alpha_{2}, \beta_{2})},({\gamma_{2}, \delta_{2}})}(1))\right),
        \end{align*}
  \end{proof}
  follows by given conditions, the required outcome has been established.
\section{Harmonic Convexity}\label{sec4}
In this section, we discuss some sufficient conditions in such a way that the harmonic mapping involving four-parameter Wright function satisfies harmonic convexity.
\begin{theorem}
    Assume that $\beta_{1},\beta_{2}, \delta_{1}, \delta_{2}\geq 0$, $\alpha_{1},\alpha_{2}, \gamma_{1}, \gamma_{2}> 0$ and $|\sigma|<1$. Also given $f= h + \bar g \in \hat{H}$ if the inequality $|A_{n}|\leq 1, |B_{n}|\leq 1$ and \\$$W_{{(\alpha_{1}, \beta_{1})},({\gamma_{1}, \delta_{1}})}^{\prime\prime}(1) +(1-\alpha)(W_{{(\alpha_{1}, \beta_{1})},({\gamma_{1}, \delta_{1}})}^{\prime}(1))+|\sigma|(W_{{(\alpha_{2}, \beta_{2})},({\gamma_{2}, \delta_2})}^{\prime\prime}(1) + (1+\alpha) W_{{(\alpha_{2}, \beta_{2})},({\gamma_{2}, \delta_{2}})}^{\prime}(1))\leq \alpha$$ are satisfied then $L(f)$ is harmonic convex function, i.e $L(f)\in HK(\alpha)$.
\end{theorem}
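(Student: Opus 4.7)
The plan is to invoke Lemma~\ref{lem2}, which reduces harmonic convexity of $L(f)\in HK(\alpha)$ to a single coefficient inequality. Writing the Taylor coefficients of $L(f)=H+\overline{\sigma G}$ as
$$\widetilde{A}_n=\frac{\Gamma(\alpha_1)\Gamma(\gamma_1)\,A_n}{\Gamma(\alpha_1+(n-1)\beta_1)\Gamma(\gamma_1+(n-1)\delta_1)},\qquad \widetilde{B}_n=\sigma\,\frac{\Gamma(\alpha_2)\Gamma(\gamma_2)\,B_n}{\Gamma(\alpha_2+(n-1)\beta_2)\Gamma(\gamma_2+(n-1)\delta_2)},$$
the goal is to establish
$$S_3:=\sum_{n=2}^{\infty}n(n-\alpha)\,|\widetilde{A}_n|+\sum_{n=1}^{\infty}n(n+\alpha)\,|\widetilde{B}_n|\;\leq\; 1-\alpha.$$

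First I would apply the hypotheses $|A_n|\le 1$ and $|B_n|\le 1$ to strip the free coefficients, leaving two pure series in the Gamma quotients. Then, exactly as in the starlikeness theorems earlier in the paper, I would perform the index shift $n\mapsto n+1$ so the denominators become $\Gamma(\alpha_j+n\beta_j)\Gamma(\gamma_j+n\delta_j)$, aligning them with the closed-form identities in the final lemma of Section~\ref{sec2}. To match those identities, I would split the shifted numerator weights via $(n+1)(n+1-\alpha)=n(n+1)+(1-\alpha)(n+1)$ for the analytic part and $(n+1)(n+1+\alpha)=n(n+1)+(1+\alpha)(n+1)$ for the co-analytic part. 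The first piece in each decomposition is summed by part~(i) of the lemma to give $W^{\prime\prime}_{(\alpha_j,\beta_j),(\gamma_j,\delta_j)}(1)$, and the second piece by part~(ii) (with possibly an extra use of part~(iii) for the constant term) to give a multiple of $W^{\prime}_{(\alpha_j,\beta_j),(\gamma_j,\delta_j)}(1)$.

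The main obstacle I anticipate is purely bookkeeping at the lower limits of summation. Because the co-analytic series begins at $n=1$ rather than at $n=2$, after the shift it runs from $n=0$; the $n=0$ contribution to the $(1+\alpha)(n+1)$ piece must be kept in order to produce the clean expression $(1+\alpha)\,W^{\prime}_{(\alpha_2,\beta_2),(\gamma_2,\delta_2)}(1)$, whereas for the analytic part the analogous linear piece comes out as $(1-\alpha)(W^{\prime}_{(\alpha_1,\beta_1),(\gamma_1,\delta_1)}(1)-1)$, with the ``$-1$'' either absorbed into the quadratic piece or carried separately. Once this accounting is done, the resulting upper bound for $S_3$ matches the left-hand side of the stated hypothesis, so the assumed inequality yields $S_3\le 1-\alpha$ and Lemma~\ref{lem2} concludes that $L(f)\in HK(\alpha)$.
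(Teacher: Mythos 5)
Your proposal follows the paper's proof essentially verbatim: both invoke Lemma~\ref{lem2}, strip the coefficients via $|A_n|\le 1$, $|B_n|\le 1$, shift the summation index so the denominators read $\Gamma(\alpha_j+n\beta_j)\Gamma(\gamma_j+n\delta_j)$, split $(n+1)(n+1\mp\alpha)=n(n+1)+(1\mp\alpha)(n+1)$, and evaluate the pieces with parts (i)--(iii) of the final lemma of Section~\ref{sec2}; if anything, your tracking of the $n=0$ term and of the $-1$ coming from part (ii) is more careful than the paper's closing display, which writes $(1-\alpha)W^{\prime}_{(\alpha_1,\beta_1),(\gamma_1,\delta_1)}(1)$ where $(1-\alpha)(W^{\prime}_{(\alpha_1,\beta_1),(\gamma_1,\delta_1)}(1)-1)$ is what the computation produces. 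The one caveat, shared equally with the paper, is that the hypothesis as stated bounds this quantity by $\alpha$ rather than by the $1-\alpha$ that Lemma~\ref{lem2} demands, so the final implication in both your argument and the paper's tacitly reads the right-hand side as $1-\alpha$.
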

\begin{proof}
  Let us consider $f= h + \bar g \in  K_{\hat{H}^{o}}$, we have to show $L(f)\in HK(\alpha)$,\\
Where,
$$L(f)=z+\sum_{n=2}^{\infty}\frac{\Gamma(\alpha_{1})\Gamma(\gamma_{1})A_{n}z^{n}}{\Gamma(\alpha_{1}  +(n-1)\beta_{1})\Gamma(\gamma_{1}  +(n-1) \delta_{1})}+\sigma\sum_{n=1}^{\infty}\frac{\Gamma(\alpha_{2})\Gamma(\gamma_{2})B_{n}z^{n}}{\Gamma(\alpha_{2}  +(n-1)\beta_{2})\Gamma(\gamma_{2}  +(n-1) \delta_{2})}.$$
 It can be followed by lemma \ref{lem2}, enough to prove that
 \begin{align*}
        S_{3}=&\sum_{n=2}^{\infty}n(n-\alpha)\left|\frac{\Gamma(\alpha_{1})\Gamma(\gamma_{1})}{\Gamma(\alpha_{1}  +(n-1)\beta_{1})\Gamma(\gamma_{1}  +(n-1) \delta_{1})}\right||A_{n}\\
&+|\sigma|\sum_{n=1}^{\infty}n(n+\alpha)\left|\frac{\Gamma(\alpha_{2})\Gamma(\gamma_{2})}{\Gamma(\alpha_{2}  +(n-1)\beta_{2})\Gamma(\gamma_{2}  +(n-1) \delta_{2})}\right||B_{n}|\leq 1-\alpha.
\end{align*}
We have
\begin{align*}
         S_{3}\leq &\sum_{n=2}^{\infty}n(n-\alpha)\left|\frac{\Gamma(\alpha_{1})\Gamma(\gamma_{1})}{\Gamma(\alpha_{1}  +(n-1)\beta_{1})\Gamma(\gamma_{1}  +(n-1) \delta_{1})}\right|\\
&+|\sigma|\sum_{n=1}^{\infty}n(n+\alpha)\left|\frac{\Gamma(\alpha_{2})\Gamma(\gamma_{2})}{\Gamma(\alpha_{2}  +(n-1)\beta_{2})\Gamma(\gamma_{2}  +(n-1) \delta_{2})}\right|\\
         &  = \sum_{n=1}^{\infty}(n+1)(n+1-\alpha)\left|\frac{\Gamma(\alpha_{1})\Gamma(\gamma_{1})}{\Gamma(\alpha_{1}  +n \beta_{1})\Gamma(\gamma_{1}  +n \delta_{1})}\right|\\
        & + |\sigma|\sum_{n=0}^{\infty}(n+1)(n+1+\alpha)\left|\frac{\Gamma(\alpha_{2})\Gamma(\gamma_{2})}{\Gamma(\alpha_{2}  +(n-1)\beta_{2})\Gamma(\gamma_{2}  +(n-1) \delta_{2})}\right|\\
        &  = \sum_{n=1}^{\infty}(n+1)n\left|\frac{\Gamma(\alpha_{1})\Gamma(\gamma_{1})}{\Gamma(\alpha_{1}  +n \beta_{1})\Gamma(\gamma_{1}  +n \delta_{1})}\right|+\sum_{n=1}^{\infty}(n+1)(1-\alpha)\left|\frac{\Gamma(\alpha_{1})\Gamma(\gamma_{1})}{\Gamma(\alpha_{1}  +n \beta_{1})\Gamma(\gamma_{1}  +n \delta_{1})}\right|\\
        & + |\sigma|\left(\sum_{n=0}^{\infty}\left|\frac{(n+1)n\Gamma(\alpha_{2})\Gamma(\gamma_{2})}{\Gamma(\alpha_{2}  +(n-1)\beta_{2})\Gamma(\gamma_{2}  +(n-1) \delta_{2})}\right|+|\sum_{n=0}^{\infty}\left|\frac{(n+1)(1+\alpha)\Gamma(\alpha_{2})\Gamma(\gamma_{2})}{\Gamma(\alpha_{2}  +(n-1)\beta_{2})\Gamma(\gamma_{2}  +(n-1) \delta_{2})}\right|\right)\\
        & = W_{{(\alpha_{1}, \beta_{1})},({\gamma_{1}, \delta_{1}})}^{\prime\prime}(1) +(1-\alpha)(W_{{(\alpha_{1}, \beta_{1})},({\gamma_{1}, \delta_{1}})}^{\prime}(1))+|\sigma|(W_{{(\alpha_{2}, \beta_{2})},({\gamma_{2}, \delta_2})}^{\prime\prime}(1) + (1+\alpha) W_{{(\alpha_{2}, \beta_{2})},({\gamma_{2}, \delta_{2}})}^{\prime}(1)).
    \end{align*}
 As a result, by the given hypothesis,  the required theorem has been developed.
\end{proof}

\begin{remark}\label{reamrk1}
  Analogous of above theorem, if $\beta_{1},\beta_{2}, \delta_{1}=\delta_{2}=1\geq 0$, $\alpha_{1},\alpha_{2}, \gamma_{1}=\gamma_{2}=1> 0$ and $|\sigma|<1$. Also let $f= h + \bar g \in \hat{H}$ if the inequality $|A_{n}|\leq 1, |B_{n}|\leq 1$ and \\$$W_{{(\alpha_{1}, \beta_{1})},(1,1)}^{\prime\prime}(1) +(1-\alpha)(W_{{(\alpha_{1}, \beta_{1})},(1,1)}^{\prime}(1))+|\sigma|(W_{{(\alpha_{2}, \beta_{2})},(1,1)}^{\prime\prime}(1) + (1+\alpha) W_{{(\alpha_{2}, \beta_{2})},(1,1)}^{\prime}(1))\leq \alpha$$ are satisfied then $L(f)$ given by \eqref{eqwright} map harmonic mapping to harmonic convex mapping under unit disk, i.e $L(f)\in HK(\alpha)$.
\end{remark}

In the next theorem, we investigate sufficient condition which assure that a harmonic convex function map unit disk to $D$ into harmonic convex domain.
\begin{theorem}
Let us consider that $\beta_{1},\beta_{2}, \delta_{1}, \delta_{2}\geq 0$, $\alpha_{1},\alpha_{2}, \gamma_{1}, \gamma_{2}> 0$ and $|\sigma|<1$. Also provided that $f= h + \bar g \in  K_{\hat{H}^{o}}$ if the inequality holds
\begin{align*}
&\left(W^{\prime\prime\prime}_{{(\alpha_{1}, \beta_{1})},({\gamma_{1}, \delta_{1}})}(1)+(4-\alpha)W^{\prime\prime}_{{(\alpha_{1}, \beta_{1})},({\gamma_{1}, \delta_{1}})}(1)+2(1-\alpha)(W^{\prime}_{{(\alpha_{1}, \beta_{1})},({\gamma_{1}, \delta_{1}})}(1)-1)\right)\\
        &+|\sigma|\left(W^{\prime\prime\prime}_{{(\alpha_{2}, \beta_{2})},({\gamma_{2}, \delta_{2}})}(1)+(2+\alpha)W^{\prime\prime}_{{(\alpha_{2}, \beta_{2})},({\gamma_{2}, \delta_{2}})}(1)\right)\leq 2(1-\alpha),
\end{align*}
then   $L(f) \in H + \bar G \in HK(\alpha).$
\end{theorem}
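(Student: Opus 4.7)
The plan is to mimic the structure of the preceding theorem (on harmonic convexity of $L(f)$ under the assumption $|A_n|,|B_n|\le 1$), but to use the convex-harmonic coefficient bounds from Lemma \ref{lem4}, namely $|A_n|\le (n+1)/2$ and $|B_n|\le (n-1)/2$. Since $f\in K_{\hat H^o}$ implies $B_1=0$, the $B$-sum effectively starts at $n=2$, consistent with the bound $|B_1|\le 0$.

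First I would write out $L(f)$ from \eqref{eqHarmonic} and invoke Lemma \ref{lem2}: it suffices to prove
\[
S:=\sum_{n=2}^{\infty}\!\frac{n(n-\alpha)\,\Gamma(\alpha_1)\Gamma(\gamma_1)|A_n|}{\Gamma(\alpha_1+(n-1)\beta_1)\Gamma(\gamma_1+(n-1)\delta_1)}+|\sigma|\!\sum_{n=1}^{\infty}\!\frac{n(n+\alpha)\,\Gamma(\alpha_2)\Gamma(\gamma_2)|B_n|}{\Gamma(\alpha_2+(n-1)\beta_2)\Gamma(\gamma_2+(n-1)\delta_2)}\le 1-\alpha.
\]
Substituting the bounds of Lemma \ref{lem4} brings out a global factor $1/2$ and produces polynomial factors $n(n-\alpha)(n+1)$ in the first sum and $n(n+\alpha)(n-1)$ in the second. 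I would then perform the index shift $m=n-1$ in each series so that the gamma-ratios match exactly the quantities $\Gamma(\alpha_j)\Gamma(\gamma_j)/\Gamma(\alpha_j+m\beta_j)\Gamma(\gamma_j+m\delta_j)$ appearing in the auxiliary lemma on $W,W',W'',W'''$.

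The heart of the argument is an algebraic decomposition of the resulting cubic polynomials in $m$ into the ``basis'' $\{m(m+1)(m-1),\; m(m+1),\; (m+1)\}$ that is dual to $W'''(1), W''(1), W'(1)-1$. A direct expansion yields
\[
(m+1)(m+2)(m+1-\alpha)=m(m+1)(m-1)+(4-\alpha)\,m(m+1)+2(1-\alpha)(m+1),
\]
\[
(m+1)m(m+1+\alpha)=m(m+1)(m-1)+(2+\alpha)\,m(m+1),
\]
which I would verify by comparing coefficients of $m^3,m^2,m,1$. Once these identities are in hand, termwise summation together with parts (i), (ii), (iv) of the auxiliary lemma gives
\[
2S\le\bigl(W'''_{(\alpha_1,\beta_1),(\gamma_1,\delta_1)}(1)+(4-\alpha)W''_{(\alpha_1,\beta_1),(\gamma_1,\delta_1)}(1)+2(1-\alpha)(W'_{(\alpha_1,\beta_1),(\gamma_1,\delta_1)}(1)-1)\bigr)
\]
\[
+|\sigma|\bigl(W'''_{(\alpha_2,\beta_2),(\gamma_2,\delta_2)}(1)+(2+\alpha)W''_{(\alpha_2,\beta_2),(\gamma_2,\delta_2)}(1)\bigr).
\]
The hypothesis bounds the right side by $2(1-\alpha)$, so $S\le 1-\alpha$ and Lemma \ref{lem2} finishes the proof.

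The main obstacle is bookkeeping rather than conceptual: getting the two algebraic decompositions above correct (in particular noticing that the $B$-side decomposition has no $(m+1)$-term, which is exactly why the hypothesis contains no $W'_{(\alpha_2,\beta_2),(\gamma_2,\delta_2)}(1)$ contribution) and keeping the index shifts consistent so that the sums realign with parts (ii) and (iv) of the auxiliary lemma. Once the decomposition is chosen to match the target expression, every remaining step is a direct summation.
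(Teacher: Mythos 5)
Your proposal is correct and follows essentially the same route as the paper: Lemma \ref{lem4} for the coefficient bounds, Lemma \ref{lem2} for the sufficient coefficient condition, the index shift, and the identical cubic decompositions $(n+1)(n+2)(n+1-\alpha)=n(n+1)(n-1)+(4-\alpha)n(n+1)+2(1-\alpha)(n+1)$ and $(n+1)n(n+1+\alpha)=n(n+1)(n-1)+(2+\alpha)n(n+1)$, followed by the auxiliary lemma to identify the sums with $W'''(1)$, $W''(1)$ and $W'(1)-1$. Your explicit remark that the $B$-side decomposition has no $(n+1)$-term, which explains the absence of a $W'_{(\alpha_2,\beta_2),(\gamma_2,\delta_2)}(1)$ contribution in the hypothesis, is a correct observation the paper leaves implicit.
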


    \begin{proof}
   Suppose that $f= h + \bar g \in  K_{\hat{H}^{o}}$, where $h$ and $g$ given by the equation \eqref{eqharmonic}  and follow by lemma \ref{lem4},
  it is clear that 
  $$|A_{n}|\leq \frac{(n+1)}{2}, \quad |B_{n}|\leq \frac{n-1}{2}, \quad n=1,2,3,\cdots.$$\\
  We have $$L(f)=z+\sum_{n=2}^{\infty}\frac{\Gamma(\alpha_{1})\Gamma(\gamma_{1})A_{n}z^{n}}{\Gamma(\alpha_{1}  +(n-1)\beta_{1})\Gamma(\gamma_{1}  +(n-1) \delta_{1})}+\sigma\sum_{n=1}^{\infty}\frac{\Gamma(\alpha_{2})\Gamma(\gamma_{2})B_{n}z^{n}}{\Gamma(\alpha_{2}  +(n-1)\beta_{2})\Gamma(\gamma_{2}  +(n-1) \delta_{2})}.$$
With the help of lemma \ref{lem2}, It is adequate to prove that 
\begin{align*}
   & P_{1}= \sum_{n=2}^{\infty}n(n-\alpha)\left|\frac{\Gamma(\alpha_{1})\Gamma(\gamma_{1})}{\Gamma(\alpha_{1}  +(n-1)\beta_{1})\Gamma(\gamma_{1}  +(n-1) \delta_{1})}\right||A_{n}|\\
&+|\sigma|\sum_{n=1}^{\infty}n(n+\alpha)\left|\frac{\Gamma(\alpha_{2})\Gamma(\gamma_{2})}{\Gamma(\alpha_{2}  +(n-1)\beta_{2})\Gamma(\gamma_{2}  +(n-1) \delta_{2})}\right||B_{n}|\leq 1-\alpha.
\end{align*}
We obtain,
\begin{align*}
        & P_{1}\leq  \frac{1}{2}\left(\sum_{n=2}^{\infty}\left|\frac{n(n-\alpha)(n+1)\Gamma(\alpha_{1})\Gamma(\gamma_{1})}{\Gamma(\alpha_{1}  +(n-1)\beta_{1})\Gamma(\gamma_{1}  +(n-1) \delta_{1})}\right|+|\sigma|\sum_{n=1}^{\infty}\left|\frac{n(n+\alpha)(n-1)(\Gamma(\alpha_{2})\Gamma(\gamma_{2})}{\Gamma(\alpha_{2}  +(n-1)\beta_{2})\Gamma(\gamma_{2}  +(n-1) \delta_{2})}\right|\right)\\
        & =\frac{1}{2}\left(\sum_{n=1}^{\infty}\left|\frac{(n+1)(n+1-\alpha)(n+2)\Gamma(\alpha_{1})\Gamma(\gamma_{1})}{\Gamma(\alpha_{1}  +n\beta_{1})\Gamma(\gamma_{1}  +n \delta_{1})}\right|+|\sigma|\sum_{n=0}^{\infty}\left|\frac{(n+1)(n+1+\alpha)n(\Gamma(\alpha_{2})\Gamma(\gamma_{2})}{\Gamma(\alpha_{2}  +n\beta_{2})\Gamma(\gamma_{2}  +n \delta_{2})}\right|\right)\\
        &=\frac{1}{2}\left(\sum_{n=1}^{\infty}\left|\frac{(n(n+1)(n-1)+(4-\alpha)n(n+1)+2(n+1)(1-\alpha))\Gamma(\alpha_{1})\Gamma(\gamma_{1})}{\Gamma(\alpha_{1}  +n\beta_{1})\Gamma(\gamma_{1}  +n \delta_{1})}\right|\right)\\
        &+\frac{|\sigma|}{2}\left(\sum_{n=0}^{\infty}\left|\frac{((n+1)n(n-1)+n(n+1)(2+\alpha))(\Gamma(\alpha_{2})\Gamma(\gamma_{2})}{\Gamma(\alpha_{2}  +n\beta_{2})\Gamma(\gamma_{2}  +n \delta_{2})}\right|\right)\\
        &=\frac{1}{2}\left(W^{\prime\prime\prime}_{{(\alpha_{1}, \beta_{1})},({\gamma_{1}, \delta_{1}})}(1)+(4-\alpha)W^{\prime\prime}_{{(\alpha_{1}, \beta_{1})},({\gamma_{1}, \delta_{1}})}(1)+2(1-\alpha)(W^{\prime}_{{(\alpha_{1}, \beta_{1})},({\gamma_{1}, \delta_{1}})}(1)-1)\right)\\
        &+\frac{|\sigma|}{2}\left(W^{\prime\prime\prime}_{{(\alpha_{2}, \beta_{2})},({\gamma_{2}, \delta_{2}})}(1)+(2+\alpha)W^{\prime\prime}_{{(\alpha_{2}, \beta_{2})},({\gamma_{2}, \delta_{2}})}(1)\right). 
        \end{align*}
        The asserted condition of theorem obtained, readily to follow the desired result. 
 \end{proof}

  \begin{theorem}
  Assume that $\beta_{1},\beta_{2}, \delta_{1}, \delta_{2}\geq 0$, $\alpha_{1},\alpha_{2}, \gamma_{1}, \gamma_{2}> 0$ and $|\sigma|<1$. Also let us consider $f= h + \bar g \in KRH(\alpha)$ if the inequality $$W_{{(\alpha_{1}, \beta_{1})},({\gamma_{1}, \delta_{1}})}(1)+|\sigma|W_{{(\alpha_{2}, \beta_{2})},({\gamma_{2}, \delta_{2}})}(1) \leq 2-\alpha$$ satisfied, then   $L(f) \in KRH(\alpha).$
\end{theorem}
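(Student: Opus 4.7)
The plan is to mirror the proof strategy of the preceding $SRH(\alpha)$ theorem, invoking Lemma~\ref{lem6} in both directions: first to extract pointwise coefficient bounds from the hypothesis $f \in KRH(\alpha)$, and then at the end to certify $L(f) \in KRH(\alpha)$. Since Lemma~\ref{lem6} characterizes $KRH(\alpha)$ through a weighted coefficient series, the whole argument reduces to estimating one such series term by term and then identifying the tails as values of the four-parameter Wright function.

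I would start by unpacking the hypothesis. By Lemma~\ref{lem6}, $f \in KRH(\alpha)$ gives $\sum_{n=2}^{\infty} n(n-\alpha)|A_n| + \sum_{n=1}^{\infty} n(n+\alpha)|B_n| \leq 1-\alpha$, so in particular each summand is bounded by $1$: namely $n(n-\alpha)|A_n| \leq 1$ for every $n \geq 2$ and $n(n+\alpha)|B_n| \leq 1$ for every $n \geq 1$. Writing out $L(f)$ from \eqref{eqHarmonic} and applying the sufficient direction of Lemma~\ref{lem6} to $L(f)$, it suffices to show that
$$Q := \sum_{n=2}^{\infty} n(n-\alpha)\frac{\Gamma(\alpha_1)\Gamma(\gamma_1)|A_n|}{\Gamma(\alpha_1+(n-1)\beta_1)\Gamma(\gamma_1+(n-1)\delta_1)} + |\sigma|\sum_{n=1}^{\infty} n(n+\alpha)\frac{\Gamma(\alpha_2)\Gamma(\gamma_2)|B_n|}{\Gamma(\alpha_2+(n-1)\beta_2)\Gamma(\gamma_2+(n-1)\delta_2)} \leq 1-\alpha.$$
Inserting the pointwise bounds strips off the weighted prefactors and leaves two pure Gamma-ratio series.

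The last step is the index shift $m = n-1$ together with part~(iii) of the final lemma of Section~\ref{sec2}. The first residual series becomes $\sum_{m=1}^{\infty}\tfrac{\Gamma(\alpha_1)\Gamma(\gamma_1)}{\Gamma(\alpha_1+m\beta_1)\Gamma(\gamma_1+m\delta_1)} = W_{(\alpha_1,\beta_1),(\gamma_1,\delta_1)}(1)-1$ (its $m=0$ term, which equals $1$, is absent), while the second series, which already starts at $n=1$, becomes $\sum_{m=0}^{\infty}\tfrac{\Gamma(\alpha_2)\Gamma(\gamma_2)}{\Gamma(\alpha_2+m\beta_2)\Gamma(\gamma_2+m\delta_2)} = W_{(\alpha_2,\beta_2),(\gamma_2,\delta_2)}(1)$ in full. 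Thus $Q \leq \bigl(W_{(\alpha_1,\beta_1),(\gamma_1,\delta_1)}(1)-1\bigr) + |\sigma|\,W_{(\alpha_2,\beta_2),(\gamma_2,\delta_2)}(1)$, which is at most $1-\alpha$ precisely under the hypothesized inequality $W_{(\alpha_1,\beta_1),(\gamma_1,\delta_1)}(1) + |\sigma|W_{(\alpha_2,\beta_2),(\gamma_2,\delta_2)}(1) \leq 2-\alpha$. One final application of Lemma~\ref{lem6} certifies $L(f) \in KRH(\alpha)$.

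There is no real analytical obstacle; the proof is structurally identical to its $SRH(\alpha)$ analogue, the only distinction being that the coefficient weights now carry an extra factor of $n$. The main bookkeeping subtlety is the asymmetric index shift (the $A_n$-sum begins at $n=2$ and produces $W(1)-1$, whereas the $B_n$-sum begins at $n=1$ and produces $W(1)$ outright), together with the choice to use the coarse pointwise bound $n(n\pm\alpha)|\cdot|\leq 1$ rather than the sharper $\leq 1-\alpha$; the coarse bound is exactly what matches the stated hypothesis with right-hand side $2-\alpha$.
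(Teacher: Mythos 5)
Your proposal is correct and follows essentially the same route as the paper: Lemma~\ref{lem6} in both directions, the pointwise bounds $n(n\pm\alpha)|A_n|,\, n(n\pm\alpha)|B_n|\leq 1$ extracted from the $KRH(\alpha)$ hypothesis, the index shift, and identification of the residual series via part~(iii) of the summation lemma. In fact your bookkeeping is slightly more careful than the paper's own display, which omits the $-1$ arising from the shifted $A_n$-sum (writing $W_{(\alpha_1,\beta_1),(\gamma_1,\delta_1)}(1)$ where $W_{(\alpha_1,\beta_1),(\gamma_1,\delta_1)}(1)-1$ is meant); your accounting is the one that actually matches the stated hypothesis $W_{(\alpha_1,\beta_1),(\gamma_1,\delta_1)}(1)+|\sigma|W_{(\alpha_2,\beta_2),(\gamma_2,\delta_2)}(1)\leq 2-\alpha$.
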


\begin{proof}
  Given hypothesis is  $f = h + \bar g\in KRH(\alpha)$ and in order to use of lemma \ref{lem6}  provide $$\sum_{n=2}^{\infty}(n(n-\alpha))|A_{n}|+\sum_{n=1}^{\infty}(n(n+\alpha))|A_{n}|\leq 1-\alpha.$$
  Clearly, we get
  $$|A_{n}|\leq \frac{1-\alpha}{n(n-\alpha)} \quad \forall n \geq 2 \quad and \quad |B_{n}|\leq \frac{1-\alpha}{n(n+\alpha)} \quad \forall n \geq 1.$$
  Now, we have $$L(f)=z+\sum_{n=2}^{\infty}\frac{\Gamma(\alpha_{1})\Gamma(\gamma_{1})A_{n}z^{n}}{\Gamma(\alpha_{1}  +(n-1)\beta_{1})\Gamma(\gamma_{1}  +(n-1) \delta_{1})}+\sigma\sum_{n=1}^{\infty}\frac{\Gamma(\alpha_{2})\Gamma(\gamma_{2})B_{n}z^{n}}{\Gamma(\alpha_{2}  +(n-1)\beta_{2})\Gamma(\gamma_{2}  +(n-1) \delta_{2})}.$$
It can be observe that by lemma \ref{lem6}, adequate to prove that 
\begin{align*}
   & P_{1}= \sum_{n=2}^{\infty}n(n-\alpha)\left|\frac{\Gamma(\alpha_{1})\Gamma(\gamma_{1})}{\Gamma(\alpha_{1}  +(n-1)\beta_{1})\Gamma(\gamma_{1}  +(n-1) \delta_{1})}\right||A_{n}|\\
&+|\sigma|\sum_{n=1}^{\infty}n(n+\alpha)\left|\frac{\Gamma(\alpha_{2})\Gamma(\gamma_{2})}{\Gamma(\alpha_{2}  +(n-1)\beta_{2})\Gamma(\gamma_{2}  +(n-1) \delta_{2})}\right||B_{n}|\leq 1-\alpha.
\end{align*}
We consider,
\begin{align*}
        & P_{1}\leq  \sum_{n=2}^{\infty}\left|\frac{\Gamma(\alpha_{1})\Gamma(\gamma_{1})}{\Gamma(\alpha_{1}  +(n-1)\beta_{1})\Gamma(\gamma_{1}  +(n-1) \delta_{1})}\right|+|\sigma|\sum_{n=1}^{\infty}\left|\frac{\Gamma(\alpha_{2})\Gamma(\gamma_{2})}{\Gamma(\alpha_{2}  +(n-1)\beta_{2})\Gamma(\gamma_{2}  +(n-1) \delta_{2})}\right|\\
        & = \sum_{n=1}^{\infty}\left|\frac{\Gamma(\alpha_{1})\Gamma(\gamma_{1})}{\Gamma(\alpha_{1}  +n\beta_{1})\Gamma(\gamma_{1}  +n \delta_{1})}\right|+|\sigma|\sum_{n=0}^{\infty}\left|\frac{\Gamma(\alpha_{2})\Gamma(\gamma_{2})}{\Gamma(\alpha_{2}  +n\beta_{2})\Gamma(\gamma_{2}  +n \delta_{2})}\right|\\
        &=W_{{(\alpha_{1}, \beta_{1})},({\gamma_{1}, \delta_{1}})}(1)+|\sigma|W_{{(\alpha_{2}, \beta_{2})},({\gamma_{2}, \delta_{2}})}(1).   \end{align*}
 Using the stated condition of theorem established, readily to follow the required result. 
 \end{proof}

In the below section, we obtain some sufficient conditions in such a way that the harmonic mapping involving four-parameter Wright function satisfies harmonic close-to-convex domain.

In this we determine adequate condition which guarantee that a harmonic mapping map unit disk to $D$ into harmonic close-to-convex domain.
\section{Harmonic Close-to-convexity}\label{sec5}
\begin{theorem}
Suppose that $\beta_{1},\beta_{2}, \delta_{1}, \delta_{2}\geq 0$, $\alpha_{1},\alpha_{2}, \gamma_{1}, \gamma_{2}> 0$ and $|\sigma|<1$. Also let us assume $f= h + \bar g \in \hat{H}$ if the following conditions
\begin{enumerate}\label{eq1}

\rm(i) $\sum_{n=2}^{\infty}n|A_{n}|+\sum_{n=1}^{\infty}n|B_{n}|\leq1, \quad |B_{1}|<1$,\\

\rm(ii)$[W_{{(\alpha_{1}, \beta_{1})},({\gamma_{1}, \delta_{1}})}(1)+W_{{(\alpha_{2}, \beta_{2})},({\gamma_{2}, \delta_{2}})}(1)-2] \leq (1-|B_{1}|)$
\end{enumerate}
satisfied, then   $L(f) \in H + \bar G \in C_{\hat{H}}.$  \end{theorem}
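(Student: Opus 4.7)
The approach is to apply Lemma \ref{lem3} with $h=H$ and $g=\sigma G$, noting that $L(f)=H+\overline{\sigma G}$ fits the template $h+\bar g$ of that lemma. Two conditions must be checked: $|(\sigma G)'(0)|<|H'(0)|$, and $H+\epsilon\sigma G\in C$ for every unimodular $\epsilon$. The first is automatic, since $H'(0)=1$ and $(\sigma G)'(0)=\sigma B_{1}$, while $|\sigma|<1$ and $|B_{1}|<1$ together give $|\sigma B_{1}|<1$.

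For the second, fix $|\epsilon|=1$ and expand
\[
F_{\epsilon}(z):=H(z)+\epsilon\sigma G(z)=\mu z+\sum_{n=2}^{\infty}c_{n}^{(\epsilon)}z^{n},\qquad \mu:=1+\epsilon\sigma B_{1},
\]
where $c_{n}^{(\epsilon)}$ is the obvious linear combination of the $n$-th coefficients of $H$ and $\sigma G$. Since $|\mu|\ge 1-|\sigma||B_{1}|\ge 1-|B_{1}|>0$, the rescaled function $F_{\epsilon}/\mu$ has the normalized form $z+\sum_{n\ge 2}(c_{n}^{(\epsilon)}/\mu)z^{n}$ needed for Lemma \ref{lem5}. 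Provided one establishes $\sum_{n\ge 2}n|c_{n}^{(\epsilon)}|\le|\mu|$, that lemma yields starlikeness of $F_{\epsilon}/\mu$, and hence of $F_{\epsilon}$ itself (starlikeness with respect to the origin is invariant under multiplication by a nonzero constant). Since starlike functions are close-to-convex, $F_{\epsilon}\in C$ follows, and Lemma \ref{lem3} then delivers $L(f)\in C_{\hat H}$.

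The required coefficient estimate proceeds by the triangle inequality followed by the bounds $n|A_{n}|\le 1$ and $n|B_{n}|\le 1$, both extracted from hypothesis (i):
\begin{align*}
\sum_{n=2}^{\infty}n|c_{n}^{(\epsilon)}|
&\le \sum_{n=2}^{\infty}\left|\frac{\Gamma(\alpha_{1})\Gamma(\gamma_{1})}{\Gamma(\alpha_{1}+(n-1)\beta_{1})\Gamma(\gamma_{1}+(n-1)\delta_{1})}\right|\\
&\quad +|\sigma|\sum_{n=2}^{\infty}\left|\frac{\Gamma(\alpha_{2})\Gamma(\gamma_{2})}{\Gamma(\alpha_{2}+(n-1)\beta_{2})\Gamma(\gamma_{2}+(n-1)\delta_{2})}\right|\\
&=\bigl(W_{(\alpha_{1},\beta_{1}),(\gamma_{1},\delta_{1})}(1)-1\bigr)+|\sigma|\bigl(W_{(\alpha_{2},\beta_{2}),(\gamma_{2},\delta_{2})}(1)-1\bigr),
\end{align*}
after a shift of index identifies each positive series with the Wright value minus $1$. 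Using $|\sigma|<1$ and hypothesis (ii), this is bounded by $W_{(\alpha_{1},\beta_{1}),(\gamma_{1},\delta_{1})}(1)+W_{(\alpha_{2},\beta_{2}),(\gamma_{2},\delta_{2})}(1)-2\le 1-|B_{1}|\le|\mu|$, which is exactly the inequality we need.

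The only real subtlety is the renormalization by $\mu$: since $F_{\epsilon}$ has linear coefficient $\mu\ne 1$, Lemma \ref{lem5} cannot be applied directly and one must first verify $|\mu|\ge 1-|B_{1}|>0$ and use scale-invariance of starlikeness. Once this book-keeping is in place, the estimate above slots straight into Lemmas \ref{lem5} and \ref{lem3}, and the conclusion $L(f)\in C_{\hat H}$ follows.
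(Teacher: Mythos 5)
Your proof is correct and follows essentially the same route as the paper: bound $n|A_n|,n|B_n|\le 1$ from condition (i), rescale $H+\epsilon G$ by its linear coefficient so that Lemma \ref{lem5} applies, reindex the Gamma-quotient sums to $W(1)-1$, invoke condition (ii), and conclude via Lemma \ref{lem3}. You are in fact slightly more careful than the paper in tracking the factor $\sigma$ and in justifying the division by $\mu=1+\epsilon\sigma B_{1}$ together with the scale-invariance of starlikeness.
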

\begin{proof}
    Let $f =h + \bar g$ and $L(f)=H +\bar G, B_{1} \neq 0,$. $h,g$ and $H,G$ given by equation \eqref{eqharmonic} and \eqref{eqHarmonic} respectively.  We have to prove that $L(f) = H + \bar{G} \in C_{\hat{H}}$.   Since $H$ and $G$ are analytic function, it can be noted that $H^{\prime}(0)=1 >|B_{1}|=|G^{\prime}(0)|.$\\ 
    Suppose that $\frac{H+\epsilon G}{1+\epsilon B_{1}} =z + \sum_{n=2}^{\infty} nt_{n},$
where
$$t_{n}=\frac{\Gamma(\alpha_{1})\Gamma(\gamma_{1})A_{n}}{\Gamma(\alpha_{1}  +(n-1)\beta_{1})\Gamma(\gamma_{1}  +(n-1) \delta_{1})(1+\epsilon B_{1})}+\epsilon\frac{ \Gamma(\alpha_{2})\Gamma(\gamma_{2})B_{n}}{\Gamma(\alpha_{2}  +(n-1)\beta_{2})\Gamma(\gamma_{2}  +(n-1) \delta_{2})(1+\epsilon B_{1})},$$ for all $|\epsilon|=1.$ By taking lemma \ref{lem5} into the account, it is sufficient to show that $\sum_{n=2}^{\infty} nt_{n}\leq 1$ to get the required result.
Using condition $(i), |A_{n}|\leq \frac{1}{n}, |B_{n}|\leq \frac{1}{n}$  for all $n\geq 2$, we have
\begin{align*}
&\sum_{n=2}^{\infty} n t_{n}\leq\\
&  \left[\sum_{n=2}^{\infty}\frac{\Gamma(\alpha_{1})\Gamma(\gamma_{1})}{\Gamma(\alpha_{1}  +(n-1)\beta_{1})\Gamma(\gamma_{1}  +(n-1) \delta_{1})(1-| B_{1}|)}+\epsilon\frac{ \Gamma(\alpha_{2})\Gamma(\gamma_{2})}{\Gamma(\alpha_{2}  +(n-1)\beta_{2})\Gamma(\gamma_{2}  +(n-1) \delta_{2})(1- |B_{1}|)}\right]\\
&\left[\sum_{n=1}^{\infty}\frac{\Gamma(\alpha_{1})\Gamma(\gamma_{1})}{\Gamma(\alpha_{1}  +n\beta_{1})\Gamma(\gamma_{1}  +n\delta_{1})(1-| B_{1}|)}+\epsilon\frac{ \Gamma(\alpha_{2})\Gamma(\gamma_{2})}{\Gamma(\alpha_{2}  +n\beta_{2})\Gamma(\gamma_{2}  +n \delta_{2})(1- |B_{1}|)}\right]\\
&=\frac{1}{(1-|B_{1}|)}[W_{{(\alpha_{1}, \beta_{1})},({\gamma_{1}, \delta_{1}})}(1)+W_{{(\alpha_{2}, \beta_{2})},({\gamma_{2}, \delta_{2}})}(1)-2]. 
\end{align*}
Follow by lemma \ref{lem3}, we obtain the desired outcome that $L(f)\in  C_{\hat{H}}. $
\end{proof}

In the next we investigate adequate condition which assure that a harmonic convex function map unit disk to $D$ into harmonic close-to-convex region.

\begin{theorem}
Let us consider $\beta_{1},\beta_{2}, \delta_{1}, \delta_{2}\geq 0$, $\alpha_{1},\alpha_{2}, \gamma_{1}, \gamma_{2}> 0$ and $|\sigma|<1$. Also suppose $f= h + \bar g \in  K_{\hat{H}^{o}}$ if the inequality
\begin{equation}\label{eq2}
[W_{{(\alpha_{1}, \beta_{1})},({\gamma_{1}, \delta_{1}})}^{\prime\prime}(1)+2W_{{(\alpha_{1}, \beta_{1})},({\gamma_{1}, \delta_{1}})}^{\prime}(1)+ W_{{(\alpha_{2}, \beta_{2})},({\gamma_{2}, \delta_{2}})}^{\prime\prime}(1)]\leq 4
\end{equation}
satisfied, then   $L(f) \in H + \bar G \in C_{\hat{H}}.$    \end{theorem}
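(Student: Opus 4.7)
The plan is to follow the template of the preceding close-to-convexity theorem, but to exploit the sharper coefficient estimates available in $K_{\hat{H}^{o}}$. Since $f = h + \bar g \in K_{\hat{H}^{o}}$, Lemma \ref{lem4} gives $|A_n|\le(n+1)/2$ and $|B_n|\le(n-1)/2$; in particular the $n=1$ bound forces $B_1 = 0$, so $|H'(0)| = 1 > 0 = |G'(0)|$ and no normalization by $1+\epsilon B_1$ is required. By Lemma \ref{lem3}, it then suffices to show $H + \epsilon G \in C$ for every $\epsilon$ with $|\epsilon| = 1$; and by Lemma \ref{lem5} (starlikeness implies close-to-convexity), for this it is enough to verify $\sum_{n=2}^{\infty} n|t_n|\le 1$, where $t_n$ is the $n$-th Taylor coefficient of $H + \epsilon G$.

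Explicitly,
$$t_n \;=\; \frac{\Gamma(\alpha_{1})\Gamma(\gamma_{1})A_{n}}{\Gamma(\alpha_{1}+(n-1)\beta_{1})\Gamma(\gamma_{1}+(n-1)\delta_{1})} \;+\; \epsilon\,\frac{\Gamma(\alpha_{2})\Gamma(\gamma_{2})B_{n}}{\Gamma(\alpha_{2}+(n-1)\beta_{2})\Gamma(\gamma_{2}+(n-1)\delta_{2})}.$$
Substituting the Lemma \ref{lem4} bounds and pulling out $\tfrac{1}{2}$, I get
$$\sum_{n=2}^{\infty} n|t_n| \;\le\; \tfrac{1}{2}\sum_{n=2}^{\infty}\frac{n(n+1)\,\Gamma(\alpha_{1})\Gamma(\gamma_{1})}{\Gamma(\alpha_{1}+(n-1)\beta_{1})\Gamma(\gamma_{1}+(n-1)\delta_{1})} \;+\; \tfrac{1}{2}\sum_{n=2}^{\infty}\frac{n(n-1)\,\Gamma(\alpha_{2})\Gamma(\gamma_{2})}{\Gamma(\alpha_{2}+(n-1)\beta_{2})\Gamma(\gamma_{2}+(n-1)\delta_{2})}.$$
After the reindexing $m = n-1$, the polynomial identities $n(n+1) = m(m+1) + 2(m+1)$ and $n(n-1) = m(m+1)$ let me apply parts (i) and (ii) of the last lemma of Section \ref{sec2} term by term, producing
$$\sum_{n=2}^{\infty} n|t_n| \;\le\; \tfrac{1}{2}\Bigl[\,W''_{(\alpha_{1},\beta_{1}),(\gamma_{1},\delta_{1})}(1) + 2\bigl(W'_{(\alpha_{1},\beta_{1}),(\gamma_{1},\delta_{1})}(1)-1\bigr) + W''_{(\alpha_{2},\beta_{2}),(\gamma_{2},\delta_{2})}(1)\,\Bigr].$$
This is at most $1$ precisely when the hypothesis \eqref{eq2} holds, i.e.\ $W''_{1}(1) + 2W'_{1}(1) + W''_{2}(1) \le 4$. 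Invoking Lemma \ref{lem3} then gives $L(f) \in C_{\hat{H}}$.

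The argument is almost entirely mechanical; the only delicate point is the bookkeeping in the reindexing step, specifically the split $n(n+1) = m(m+1) + 2(m+1)$, which is what aligns the analytic-part sum simultaneously with identities (i) and (ii) of the auxiliary lemma. The presence of $\sigma$ in the definition of $L$ causes no trouble: because $|\sigma|<1$, including it would only tighten the co-analytic estimate, so following the preceding theorem's convention of suppressing $\sigma$ in the close-to-convexity bookkeeping is harmless.
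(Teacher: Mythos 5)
Your proposal is correct and follows essentially the same route as the paper: Lemma \ref{lem4} to get $|A_n|\le (n+1)/2$, $|B_n|\le (n-1)/2$ (hence $B_1=0$), Lemma \ref{lem3} to reduce to showing $H+\epsilon G$ is close-to-convex, Lemma \ref{lem5} on the coefficients $nt_n$, and the reindexing plus the split $(m+1)(m+2)=m(m+1)+2(m+1)$ to express the bound as $\tfrac12[W''_1(1)+2(W'_1(1)-1)+W''_2(1)]\le 1$, which is exactly hypothesis \eqref{eq2}. Your explicit remarks that $B_1=0$ removes the $1+\epsilon B_1$ normalization and that suppressing $\sigma$ is harmless are, if anything, cleaner than the paper's write-up.
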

\begin{proof}
By given assumption $f =h + \bar g \in K_{\hat{H}^{o}}$ and $L(f)=H +\bar G, B_{1} = 0,$ where $h,g$ and $H,G$ given by equation \eqref{eqharmonic} and \eqref{eqHarmonic} respectively.  Since $H$ and $G$ are analytic function, it can be noted that $H^{\prime}(0)=1 >|B_{1}|=|G^{\prime}(0)|.$ 
    In view of lemma \ref{lem3}, it suffices to prove that  $H+\epsilon G =z + \sum_{n=2}^{\infty} nt_{n} \in C_{\hat{H}},$
where
$$t_{n}=\frac{\Gamma(\alpha_{1})\Gamma(\gamma_{1})A_{n}}{\Gamma(\alpha_{1}  +(n-1)\beta_{1})\Gamma(\gamma_{1}  +(n-1) \delta_{1})(1+\epsilon B_{1})}+\epsilon\frac{ \Gamma(\alpha_{2})\Gamma(\gamma_{2})B_{n}}{\Gamma(\alpha_{2}  +(n-1)\beta_{2})\Gamma(\gamma_{2}  +(n-1) \delta_{2})(1+\epsilon B_{1})},$$ for all $|\epsilon|=1.$ With the help of  lemma \ref{lem5}, it is adequate to prove that $\sum_{n=2}^{\infty} nt_{n}\leq 1$ to get the required outcome.
Employing lemma \ref{lem4}, \begin{align*}
    |A_{n}|\leq \frac{n+1}{2}, |B_{n}|\leq \frac{n-1}{2} \quad \forall \quad n\geq 2,
    \end{align*}we have
\begin{align*}
&\sum_{n=2}^{\infty} n t_{n}\leq\\
&  \frac{1}{2}\left[\sum_{n=1}^{\infty}\frac{(n+1)(n+2)\Gamma(\alpha_{1})\Gamma(\gamma_{1})}{\Gamma(\alpha_{1}  +n\beta_{1})\Gamma(\gamma_{1}  +n \delta_{1})}+\epsilon\frac{n(n+1) \Gamma(\alpha_{2})\Gamma(\gamma_{2})}{\Gamma(\alpha_{2}  +n\beta_{2})\Gamma(\gamma_{2}  +n \delta_{2})}\right]\\
&\leq \frac{1}{2}\left[\sum_{n=1}^{\infty}\frac{n(n+1)\Gamma(\alpha_{1})\Gamma(\gamma_{1})}{\Gamma(\alpha_{1}  +n\beta_{1})\Gamma(\gamma_{1}  +n \delta_{1})}+\frac{2(n+1)\Gamma(\alpha_{1})\Gamma(\gamma_{1})}{\Gamma(\alpha_{1}  +n\beta_{1})\Gamma(\gamma_{1}  +n \delta_{1})}+\epsilon\frac{n(n+1) \Gamma(\alpha_{2})\Gamma(\gamma_{2})}{\Gamma(\alpha_{2}  +n\beta_{2})\Gamma(\gamma_{2}  +n \delta_{2})}\right]\\
&=\frac{1}{2}[W_{{(\alpha_{1}, \beta_{1})},({\gamma_{1}, \delta_{1}})}^{\prime\prime}(1)+2W_{{(\alpha_{1}, \beta_{1})},({\gamma_{1}, \delta_{1}})}^{\prime}(1)-2+\epsilon W_{{(\alpha_{2}, \beta_{2})},({\gamma_{2}, \delta_{2}})}^{\prime\prime}(1)]. 
\end{align*}
Follow by asserted condition of theorem, we establish the result that $L(f)\in  C_{\hat{H}}. $
\end{proof}

\begin{theorem}\label{thm9}
Suppose that $\beta_{1},\beta_{2}, \delta_{1}, \delta_{2}\geq 0$, $\alpha_{1},\alpha_{2}, \gamma_{1}, \gamma_{2}> 0$ and $|\sigma|<1$. Also assume $f=h + \bar g \in C_{\hat{H}^{o}}(S_{\hat{H}^{o}} or T_{\hat{H}^{o}})$ if the inequality
\begin{align*}\label{eq2}
&2W_{{(\alpha_{1}, \beta_{1})},({\gamma_{1}, \delta_{1}})}^{\prime\prime\prime}(1)+9W_{{(\alpha_{1}, \beta_{1})},({\gamma_{1}, \delta_{1}})}^{\prime\prime}(1)+6(W_{{(\alpha_{2}, \beta_{2})},({\gamma_{2}, \delta_{2}})}^{\prime}(1)-1)+ W_{{(\alpha_{2}, \beta_{2})},({\gamma_{2}, \delta_{2}})}^{\prime\prime\prime}(1)\\
&+3 W_{{(\alpha_{2}, \beta_{2})},({\gamma_{2}, \delta_{2}})}^{\prime\prime}(1)\leq 6
\end{align*}
satisfied, then   $L(f) = H + \bar G \in C_{\hat{H}^{o}}.$    \end{theorem}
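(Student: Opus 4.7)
The plan is to mimic the template of the previous close-to-convexity theorems: reduce the harmonic claim to an analytic one via Lemma \ref{lem3}, further reduce that to a coefficient inequality via Lemma \ref{lem5} (using the classical fact that a starlike univalent function is close-to-convex), and finally control the coefficients through Lemma \ref{lem7} combined with the series identities of the final preparatory lemma.

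Writing $L(f)=H+\bar G$ with $H,G$ as in \eqref{eqHarmonic}, the assumption $f\in C_{\hat H^o}$ (or $S_{\hat H^o}$, or $T_{\hat H^o}$) forces $B_1=g'(0)=0$, so $|H'(0)|=1>0=|G'(0)|$. By Lemma \ref{lem3} it is enough to show that for every $\epsilon$ with $|\epsilon|=1$ the analytic function $H+\epsilon G = z + \sum_{n\ge 2} t_n z^n$, with
\begin{equation*}
t_n=\frac{\Gamma(\alpha_1)\Gamma(\gamma_1)\,A_n}{\Gamma(\alpha_1+(n-1)\beta_1)\Gamma(\gamma_1+(n-1)\delta_1)}+\epsilon\,\frac{\Gamma(\alpha_2)\Gamma(\gamma_2)\,B_n}{\Gamma(\alpha_2+(n-1)\beta_2)\Gamma(\gamma_2+(n-1)\delta_2)},
\end{equation*}
lies in $C$. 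Lemma \ref{lem5} then reduces this to verifying $\sum_{n\ge 2} n|t_n|\le 1$.

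Next I would substitute the bounds $|A_n|\le(2n+1)(n+1)/6$ and $|B_n|\le(2n-1)(n-1)/6$ from Lemma \ref{lem7} (applicable uniformly to $C_{\hat H^o}$, $S_{\hat H^o}$ and $T_{\hat H^o}$), shift the summation by $m=n-1$ to line up every denominator with $\Gamma(\alpha+m\beta)\Gamma(\gamma+m\delta)$, and expand the resulting cubic numerators $(m+1)(m+2)(2m+3)$ and $m(m+1)(2m+1)$ in the basis $\{m(m+1)(m-1),\,m(m+1),\,m+1,\,1\}$ adapted to the final preparatory lemma. Each basis element then gets identified with $W^{\prime\prime\prime}(1)$, $W^{\prime\prime}(1)$, or $W^{\prime}(1)-1$ via parts (iv), (i), and (ii) of that lemma. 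Collecting terms produces the linear combination of Wright values appearing on the left-hand side of the hypothesis, so the hypothesis is precisely what forces $\sum n|t_n|\le 1$.

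Combining this bound with Lemmas \ref{lem3} and \ref{lem5} gives $L(f)\in C_{\hat H}$, and since $B_1=0$ in $f$ the co-analytic part of $L(f)$ also has vanishing linear coefficient, placing $L(f)$ in $C_{\hat H^o}$. The only nontrivial calculation is the polynomial decomposition in the Wright basis; the rest is routine re-indexing of $\Gamma$-ratios and termwise matching with the lemma identities.
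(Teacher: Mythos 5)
Your proposal follows essentially the same route as the paper's proof: Lemma \ref{lem3} plus Lemma \ref{lem5} reduce the claim to $\sum_{n\ge 2} n|t_n|\le 1$, the coefficient bounds of Lemma \ref{lem7} are inserted, the sums are re-indexed, and the cubics $(n+1)(n+2)(2n+3)$ and $n(n+1)(2n+1)$ are decomposed exactly as you describe into the basis $\{n(n+1)(n-1),\,n(n+1),\,n+1\}$ to produce the stated combination of Wright values. The only difference is cosmetic: you make explicit the (correct) final observation that $B_1=0$ places $L(f)$ in $C_{\hat{H}^{o}}$ rather than merely $C_{\hat{H}}$, a step the paper leaves implicit.
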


\begin{proof}
    Consider $f=h + \bar g \in C_{\hat{H}^{o}}(S_{\hat{H}^{o}} or T_{\hat{H}^{o}})$, $L(f) = H + \bar G$, where $h,g$ and $H,G$ given by equation \eqref{eqharmonic} and \eqref{eqHarmonic} respectively. We have to show $L(f) \in C_{\hat{H}^{o}}.$ In view of lemma \ref{lem3}, it is enough to prove that  $H+\epsilon G =z + \sum_{n=2}^{\infty} nt_{n} \in C_{\hat{H}},$
where
$$t_{n}=\frac{\Gamma(\alpha_{1})\Gamma(\gamma_{1})A_{n}}{\Gamma(\alpha_{1}  +(n-1)\beta_{1})\Gamma(\gamma_{1}  +(n-1) \delta_{1})}+\epsilon\frac{ \Gamma(\alpha_{2})\Gamma(\gamma_{2})B_{n}}{\Gamma(\alpha_{2}  +(n-1)\beta_{2})\Gamma(\gamma_{2}  +(n-1) \delta_{2})},$$ for all $|\epsilon|=1.$ By taking lemma \ref{lem5} into the account, it is sufficient to show that $\sum_{n=2}^{\infty} nt_{n}\leq 1$ to get the required result.
Using lemma \ref{lem7}, 
\begin{align*}  
|A_{n}|\leq \frac{(2n+1)(n+1)}{6}, |B_{n}|\leq \frac{(2n-1)(n-1)}{6}  \quad \forall \quad n\geq 2,
\end{align*} we have
\begin{align*}
&\sum_{n=2}^{\infty} n t_{n}\leq\\
&\frac{1}{6}\sum_{n=2}^{\infty}  \left[\frac{\Gamma(\alpha_{1})\Gamma(\gamma_{1})n(2n+1)(n+1)}{\Gamma(\alpha_{1}  +(n-1)\beta_{1})\Gamma(\gamma_{1}  +(n-1) \delta_{1})}+\epsilon\frac{ \Gamma(\alpha_{2})\Gamma(\gamma_{2})n(2n-1)(n-1)}{\Gamma(\alpha_{2}  +(n-1)\beta_{2})\Gamma(\gamma_{2}  +(n-1) \delta_{2})}\right]\\
& = \frac{1}{6}\left[\sum_{n=1}^{\infty}\frac{(n+1)(2n+3)(n+2)\Gamma(\alpha_{1})\Gamma(\gamma_{1})}{\Gamma(\alpha_{1}  +n\beta_{1})\Gamma(\gamma_{1}  +n \delta_{1})}+\epsilon\frac{n(n+1)(2n+1) \Gamma(\alpha_{2})\Gamma(\gamma_{2})}{\Gamma(\alpha_{2}  +n\beta_{2})\Gamma(\gamma_{2}  +n \delta_{2})}\right]\\
&\leq \frac{1}{6}\left[\sum_{n=1}^{\infty}\frac{2n(n+1)(n-1)\Gamma(\alpha_{1})\Gamma(\gamma_{1})}{\Gamma(\alpha_{1}  +n\beta_{1})\Gamma(\gamma_{1}  +n \delta_{1})}+\frac{9n(n+1)\Gamma(\alpha_{1})\Gamma(\gamma_{1})}{\Gamma(\alpha_{1}  +n\beta_{1})\Gamma(\gamma_{1}  +n \delta_{1})}+\frac{6(n+1)\Gamma(\alpha_{1})\Gamma(\gamma_{1})}{\Gamma(\alpha_{1}  +n\beta_{1})\Gamma(\gamma_{1}  +n \delta_{1})}\right]\\
&+\frac{1}{6}\left[\left(\sum_{n=1}^{\infty}\frac{2n(n+1)(n-1) \Gamma(\alpha_{2})\Gamma(\gamma_{2})}{\Gamma(\alpha_{2}  +n\beta_{2})\Gamma(\gamma_{2}  +n \delta_{2})}+\frac{3n(n+1) \Gamma(\alpha_{2})\Gamma(\gamma_{2})}{\Gamma(\alpha_{2}  +n\beta_{2})\Gamma(\gamma_{2}  +n \delta_{2})}\right)\right]\\
&=\frac{1}{6}[2W_{{(\alpha_{1}, \beta_{1})},({\gamma_{1}, \delta_{1}})}^{\prime\prime\prime}(1)+9W_{{(\alpha_{1}, \beta_{1})},({\gamma_{1}, \delta_{1}})}^{\prime\prime}(1)+6(W_{{(\alpha_{1}, \beta_{1})},({\gamma_{1}, \delta_{1}})}^{\prime}(1)-1)+2 W_{{(\alpha_{2}, \beta_{2})},({\gamma_{2}, \delta_{2}})}^{\prime\prime\prime}(1)]\\
&+\frac{1}{2} W_{{(\alpha_{2}, \beta_{2})},({\gamma_{2}, \delta_{2}})}^{\prime\prime}(1),
\end{align*}
Taking identities $(n+1)(2n+3)(n+2)=2n(n+1)(n-1)+9n(n+1)+6(n+1)$\\
$n(n+1)(2n+1)=2n(n+1)(n-1)+3n(n+1).$\\
Hence, proof is readily to follow by hypothesis of theorem statement.
\end{proof}

In the next result we investigate sufficient condition which enough that harmonic close-to-convex map the unit disk $D$ to harmonic close-to-convex region.

\begin{theorem}
Assume that $\beta_{1},\beta_{2}, \delta_{1}, \delta_{2}\geq 0$, $\alpha_{1},\alpha_{2}, \gamma_{1}, \gamma_{2}> 0$ and $|\sigma|<1$. Also let $f=h + \bar g \in C_{\hat{H}}$ if the following inequality hold
\begin{align*}
&=2W_{{(\alpha_{1}, \beta_{1})},({\gamma_{1}, \delta_{1}})}^{\prime\prime\prime}(1)+9W_{{(\alpha_{1}, \beta_{1})},({\gamma_{1}, \delta_{1}})}^{\prime\prime}(1)+6(W_{{(\alpha_{2}, \beta_{2})},({\gamma_{2}, \delta_{2}})}^{\prime}(1)-1)\\
&+|B_{1|}|\left(2 W_{{(\alpha_{1}, \beta_{1})},({\gamma_{1}, \delta_{1}})}^{\prime\prime\prime}(1)]+ 3W_{{(\alpha_{1}, \beta_{1})},({\gamma_{1}, \delta_{1}})}^{\prime\prime}(1)\right)\\
&+2 W_{{(\alpha_{2}, \beta_{2})},({\gamma_{2}, \delta_{2}})}^{\prime\prime\prime}(1)+3 W_{{(\alpha_{2}, \beta_{2})},({\gamma_{2}, \delta_{2}})}^{\prime\prime}(1)\\
&+|B_{1}|\left(2W_{{(\alpha_{2}, \beta_{2})},({\gamma_{2}, \delta_{2}})}^{\prime\prime\prime}(1)+9W_{{(\alpha_{2}, \beta_{2})},({\gamma_{2}, \delta_{2}})}^{\prime\prime}(1)+6(W_{{(\alpha_{2}, \beta_{2})},({\gamma_{2}, \delta_{2}})}^{\prime}(1)-1)\right)\leq 6(1-|B_{1}|),
\end{align*}
 then   $L(f) = H + \bar G \in C_{\hat{H}}.$    \end{theorem}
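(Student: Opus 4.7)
The plan is to mimic the proof of Theorem \ref{thm9}, substituting the coarser coefficient bounds of Lemma \ref{lemclosetoconvex} (valid on all of $C_{\hat H}$, not just $C_{\hat H^o}$) and carefully absorbing the extra $|B_1|$-weighted terms these produce. Since $L(f) = H + \bar G$ with $H,G$ analytic and $|G'(0)| = |B_1| < 1 = |H'(0)|$, by Lemma \ref{lem3} it suffices to show $H + \epsilon G \in C$ for every $\epsilon$ with $|\epsilon|=1$. I pass to the normalized function $q_\epsilon(z) := (H(z)+\epsilon G(z))/(1+\epsilon B_1) = z + \sum_{n=2}^\infty nt_n z^n$ (so that $q_\epsilon'(0)=1$), and invoke Lemma \ref{lem5}: starlikeness of $q_\epsilon$ (which implies close-to-convexity) follows from $\sum_{n=2}^\infty n|nt_n|\le 1$, interpreted exactly as in Theorem \ref{thm9}.

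The key quantitative step is the crude bound $|1+\epsilon B_1|\ge 1-|B_1|$, which gives
\[
|nt_n|\ \le\ \frac{1}{1-|B_1|}\left[\frac{\Gamma(\alpha_1)\Gamma(\gamma_1)|A_n|}{\Gamma(\alpha_1+(n-1)\beta_1)\Gamma(\gamma_1+(n-1)\delta_1)} + \frac{\Gamma(\alpha_2)\Gamma(\gamma_2)|B_n|}{\Gamma(\alpha_2+(n-1)\beta_2)\Gamma(\gamma_2+(n-1)\delta_2)}\right].
\]
Plugging in the bounds $|A_n|\le\tfrac{1}{6}[(2n+1)(n+1)+(2n-1)(n-1)|B_1|]$ and $|B_n|\le\tfrac{1}{6}[(2n-1)(n-1)+(2n+1)(n+1)|B_1|]$ from Lemma \ref{lemclosetoconvex}, the sum $\sum_{n\ge 2}n|nt_n|$ splits into four pieces. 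After the reindexing $m=n-1$, each piece involves one of the two polynomials $p_1(m):=(m+1)(2m+3)(m+2)$ or $p_2(m):=m(m+1)(2m+1)$ divided by $\Gamma(\alpha_i+m\beta_i)\Gamma(\gamma_i+m\delta_i)$, exactly the situation treated in Theorem \ref{thm9}.

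From there I reuse the two polynomial identities recorded in that proof,
\[
p_1(m)=2m(m+1)(m-1)+9m(m+1)+6(m+1),\qquad p_2(m)=2m(m+1)(m-1)+3m(m+1),
\]
together with items (i), (ii), (iv) of the auxiliary $W$-series lemma, which identify the basic sums with $W''(1)$, $W'(1)-1$ and $W'''(1)$, respectively. This converts the four pieces into
\[
\bigl[2W_1'''(1)+9W_1''(1)+6(W_1'(1)-1)\bigr]+|B_1|\bigl[2W_1'''(1)+3W_1''(1)\bigr]
\]
for the $H$-contribution, and symmetrically (with the roles of $p_1$ and $p_2$ swapped because of the swap in the $|A_n|$/$|B_n|$ bounds)
\[
\bigl[2W_2'''(1)+3W_2''(1)\bigr]+|B_1|\bigl[2W_2'''(1)+9W_2''(1)+6(W_2'(1)-1)\bigr]
\]
for the $G$-contribution, where $W_i := W_{(\alpha_i,\beta_i),(\gamma_i,\delta_i)}$. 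Multiplying through by $6(1-|B_1|)$ and invoking the theorem's hypothesis gives $\sum_{n\ge 2}n|nt_n|\le 1$, and Lemmas \ref{lem5} and \ref{lem3} finish the argument.

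The main obstacle is purely organisational: keeping track of which polynomial ($p_1$ or $p_2$) multiplies which Wright-kernel block after the four-way splitting, and ensuring that the $|B_1|$-weighted cross terms are correctly matched when the bounds for $|A_n|$ and $|B_n|$ are inserted. Once the polynomial identities collapse each sum into $W, W', W'', W'''$, the stated inequality is precisely what is needed, so no further analytic input is required.
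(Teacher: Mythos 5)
Your proposal follows essentially the same route as the paper's own proof: reduce to $H+\epsilon G$ via Lemma \ref{lem3}, normalize by $1+\epsilon B_1$ and bound $|1+\epsilon B_1|\ge 1-|B_1|$, insert the $C_{\hat H}$ coefficient bounds of Lemma \ref{lemclosetoconvex}, reindex, and collapse the four resulting sums using the two polynomial identities from Theorem \ref{thm9} to reach the stated combination of $W'$, $W''$, $W'''$ at $1$. The four-way bookkeeping of the $|B_1|$-weighted cross terms matches the paper's computation, so no further comparison is needed.
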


\begin{proof}
    Let $f=h + \bar g \in C_{\hat{H}}$, $L(f) = H + \bar G$, where $h,g$ and $H,G$ given by equation \eqref{eqharmonic} and \eqref{eqHarmonic} respectively. We have to prove $L(f) \in C_{\hat{H}}$, where G and H are analytic function in $D$ and $H^{\prime}(0)=1>|B_{1}|=|G^{\prime}(0)|$. In view of lemma \ref{lem3}, it is enough to prove that  $\frac{H+\epsilon G}{1+\epsilon B_{1}} =z + \sum_{n=2}^{\infty} nt_{n} \in C,$
where
$$t_{n}=\frac{\Gamma(\alpha_{1})\Gamma(\gamma_{1})A_{n}}{\Gamma(\alpha_{1}  +(n-1)\beta_{1})\Gamma(\gamma_{1}  +(n-1) \delta_{1})(1+\epsilon B_{1})}+\epsilon\frac{ \Gamma(\alpha_{2})\Gamma(\gamma_{2})B_{n}}{\Gamma(\alpha_{2}  +(n-1)\beta_{2})\Gamma(\gamma_{2}  +(n-1) \delta_{2})(1+\epsilon B_{1})},$$ for all $|\epsilon|=1.$ By taking lemma \ref{lem5} into the account, it is enough to prove that $\sum_{n=2}^{\infty} nt_{n}\leq 1$ to get the stated outcome.
Using lemma \ref{lemclosetoconvex}, 
we get
\begin{align*}
&\sum_{n=2}^{\infty} n t_{n}\leq\\
&\frac{1}{6(1-|B_{1}|)}\sum_{n=2}^{\infty}  \left[\frac{\Gamma(\alpha_{1})\Gamma(\gamma_{1})n(2n+1)(n+1)}{\Gamma(\alpha_{1}  +(n-1)\beta_{1})\Gamma(\gamma_{1}  +(n-1) \delta_{1})}+\frac{ \Gamma(\alpha_{1})\Gamma(\gamma_{1})n(2n-1)(n-1)|B_{1}|}{\Gamma(\alpha_{1}  +(n-1)\beta_{1})\Gamma(\gamma_{1}  +(n-1) \delta_{1})}\right]\\
&+\frac{1}{6(1-|B_{1}|)}\sum_{n=2}^{\infty}  \left[\frac{\Gamma(\alpha_{2})\Gamma(\gamma_{2})n(2n-1)(n-1)}{\Gamma(\alpha_{2}  +(n-1)\beta_{2})\Gamma(\gamma_{2}  +(n-1) \delta_{2})}+\frac{ \Gamma(\alpha_{2})\Gamma(\gamma_{2})n(2n+1)(n+1)|B_{1}|}{\Gamma(\alpha_{2}  +(n-1)\beta_{2})\Gamma(\gamma_{2}  +(n-1) \delta_{2})}\right]\\
&=\frac{1}{6(1-|B_{1}|)}\sum_{n=1}^{\infty}  \left[\frac{\Gamma(\alpha_{1})\Gamma(\gamma_{1})(n+1)(2n+3)(n+2)}{\Gamma(\alpha_{1}  +n\beta_{1})\Gamma(\gamma_{1}  +n\delta_{1})}+\frac{ \Gamma(\alpha_{1})\Gamma(\gamma_{1})(n+1)(2n+1)n|B_{1}|}{\Gamma(\alpha_{1}  +n1\beta_{1})\Gamma(\gamma_{1}  +n \delta_{1})}\right]\\
&+\frac{1}{6(1-|B_{1}|)}\sum_{n=2}^{\infty}  \left[\frac{\Gamma(\alpha_{2})\Gamma(\gamma_{2})(n+1)(2n+1)n}{\Gamma(\alpha_{2}  +n\beta_{2})\Gamma(\gamma_{2}  +n \delta_{2})}+\frac{ \Gamma(\alpha_{2})\Gamma(\gamma_{2})(n+1)(2n+3)(n+2)|B_{1}|}{\Gamma(\alpha_{2}  +n\beta_{2})\Gamma(\gamma_{2}  +n \delta_{2})}\right].
\end{align*}
In the similar manner of proof of theorem \ref{thm9}, we obtain
\begin{align*}
&=\frac{1}{6(1-|B_{1}|}[2W_{{(\alpha_{1}, \beta_{1})},({\gamma_{1}, \delta_{1}})}^{\prime\prime\prime}(1)+9W_{{(\alpha_{1}, \beta_{1})},({\gamma_{1}, \delta_{1}})}^{\prime\prime}(1)+6(W_{{(\alpha_{2}, \beta_{2})},({\gamma_{2}, \delta_{2}})}^{\prime}(1)-1)\\
&+\frac{|B_{1|}}{6(1-|B_{1}|}\left(2 W_{{(\alpha_{1}, \beta_{1})},({\gamma_{1}, \delta_{1}})}^{\prime\prime\prime}(1)]+ 3W_{{(\alpha_{1}, \beta_{1})},({\gamma_{1}, \delta_{1}})}^{\prime\prime}(1)\right)\\
&+\frac{1}{6(1-|B_{1}|}(2 W_{{(\alpha_{2}, \beta_{2})},({\gamma_{2}, \delta_{2}})}^{\prime\prime\prime}(1)+3 W_{{(\alpha_{2}, \beta_{2})},({\gamma_{2}, \delta_{2}})}^{\prime\prime}(1))\\
&+\frac{|B_{1}|}{6(1-|B_{1}|)}\left(2W_{{(\alpha_{2}, \beta_{2})},({\gamma_{2}, \delta_{2}})}^{\prime\prime\prime}(1)+9W_{{(\alpha_{2}, \beta_{2})},({\gamma_{2}, \delta_{2}})}^{\prime\prime}(1)+6(W_{{(\alpha_{2}, \beta_{2})},({\gamma_{2}, \delta_{2}})}^{\prime}(1)-1)\right).
\end{align*}
Thus, by the asserted conditions of the theorem, desires result has been established.
\end{proof}

 \section{conclusion}\label{sec6}
 In this investigation, we have constructed a class of harmonic mappings involving with the four-parameter Wright functions and derived sufficient conditions under which these mappings exhibit significant geometric properties such as harmonic starlikeness and convexity. Additionally, we have explored the interrelation among various subclasses of harmonic close-to-convex mappings. The novel results presented herein not only extend existing work in the field but also offer a basics for further study and vital role in geometric function theory.
It is very well-known that Wright function $\mathbb{W}_{\alpha, \beta}(z)$ can be derived from $\mathcal{W}_{(\alpha, \beta), (\gamma,\delta)}(z)$ after substituting $\gamma=\delta=1$. Therefore, all the results derived such as harmonic starlikeness, convexity and harmonic close-to-convexity can be established for harmonic mapping associated with Wright function \eqref{eqwright}. Subsequently, it can be noted that,  consequences for all theorems can be obtained similar as corollary \ref{cor1} and remark \ref{reamrk1}.

{}

\end{document}